\def\otm{\otimes}
\def\Z{\mathbb Z}
\def\lr#1{[\,#1\,]}
\def\lra#1{\langle\,#1\rangle}
\def\w{\mathfrak{w}}
\newtheorem{thm}{Theorem}[section]
\newtheorem{lem}[thm]{Lemma}
\newtheorem{prop}[thm]{Proposition}
\newtheorem{cor}[thm]{Corollary}
\newtheorem{defn}[thm]{Definition}
\newtheorem{exam}[thm]{Example}
\numberwithin{equation}{section}
\date{}
\begin{document}

\thispagestyle{empty}

\begin{center}
{\bf{ \LARGE  On $4n$-dimensional neither pointed nor semisimple Hopf algebras and the associated weak Hopf algebras} \footnotetext { $\dag$
Corresponding author: slyang@bjut.edu.cn}}


\bigbreak

\normalsize Jialei Chen$^a$, Shilin Yang$^{a\dag}$, Dingguo Wang$^b$, Yongjun Xu$^b$

{\footnotesize\small\sl $^a$School of Mathematics, Statistics and Mechanics,
Beijing University of Technology, Beijing 100124, P. R. China\\ }

{\footnotesize\small\sl $^b$School of Mathematical Sciences,
Qufu Normal University, Qufu 273165, P. R. China\\ }
\end{center}
\begin{quote}
{\noindent\small{\bf Abstract.}
For a class of neither pointed nor semisimple Hopf algebras $H_{4n}$ of dimension $4n$, it is shown that they are quasi-triangular, which universal $R$-matrices are described. The corresponding weak Hopf algebras $\w H_{4n}$ and their representations are constructed. Finally, their duality and their Green rings are established by generators and relations explicitly. It turns out that the Green rings of the associated weak Hopf algebras
 are not commutative even if the Green rings of $H_{4n}$ are commutative.
\\
{\bf Keywords}:  quasi-triangularity, Green ring, weak Hopf algebra, representation.

\noindent {\bf Mathematics Subject Classification:}\quad 16G10, 16D70, 16T99.
}
\end{quote}

\section*{Introduction}
Let $k$ be an algebraically closed field of characteristic zero.
In this paper, we will study representations of the class of neither pointed nor semisimple Hopf algebra $H_{4n}$ of dimension $4n$  (see Definition \ref{H4n})  and the associated weak Hopf algebras.

   The class of Hopf algebras $H_{4n}$ plays an important role in constructing new Nichols
algebras, new Hopf algebras and classifying Hopf algebras. Note that if $n=2, 3$ and $a\neq 0$, it is just
the unique neither pointed nor-semisimple $8$-dimensional Hopf algebra $(A_{C_4}^{\prime\prime})^\ast$
 (see \cite{Wa}), or the $12$-dimensional Hopf algebra $A_{1}^\ast$ (see \cite{NAT}) up to isomorphism respectively.
 In \cite{GG}, the authors determined all finite-dimensional Hopf algebras over $k$ whose coradical generated a Hopf subalgebra isomorphic to $H_8$. They also obtained new Nichols algebras of dimension $8$ and new Hopf algebras of dimension $64$. Based on this, \cite{RX} determined all finite-dimensional Nichols algebras over the semisimple objects in $^{H_8}_{H_8}YD$ and obtained some new Nichols algebras of non-diagonal type and new Hopf algebras without the dual Chevalley property. By the equivalence $_{M}D(H_{12})\simeq ^{H_{12}}_{H_{12}}YD$, the authors (\cite{HX16},\cite{RX17}) obtained some new Nichols algebras which were not of diagonal type and some families of new
Hopf algebras of dimension $216$.

As is well known, the classification of finite dimensional Hopf algebras over $k$ is an important open problem. Since Kaplansky's conjectures posed in 1975, several results on them have been obtained (see \cite{Zhu, DS, AN, NG02, NG05, NG09, CNG, BG}). In \cite{AN}, the authors proved that there were exactly $4(q-1)$ isomorphism classes of non-semisimple pointed Hopf algebras of dimension
$pq^2$, of which, those Radford's Hopf algebras (see \cite{RAD}) occupied $1/4$.
 It is remarked that the dual of $H_{4n}$  is just the Radford's Hopf algebra in \cite{RAD}.

Given a Hopf algebra $H$, the
decomposition problem of tensor products of indecomposable modules has attracted numerous attentions. In \cite{CIBILS}, Cibils classified the indecomposable modules over $k\mathbb{Z}_{n}(q)/I_{d}$, and gave the decomposition formulas of the tensor product of two indecomposable $k\mathbb{Z}_{n}(q)/I_{d}$-modules. Yang determined the
representation type of a class of pointed Hopf algebras, classified all indecomposable modules of simple pointed Hopf algebra $R(q,a)$.
The decomposition formulas of the tensor product of two indecomposable $R(q,a)$-modules is given(see \cite{YANG2}) . It is noted that some results of $R(q,a)$ were recently extended to more general case of pointed Hopf algebras of rank one by Wang et al. (see \cite{WLZ}). Li and Hu described the Green rings of the 2-rank Taft algebra(at $q=-1$) (see \cite{LH}). Chen, Van Oystaeyen and Zhang gave the Green rings of the Taft algebra $H_{n}(q)$ (see \cite{COZ}). Li and Zhang extended the results of \cite{COZ}, computed the Green rings of the Generalized Taft Hopf algebras $H_{n,d}$ by generators and generating relations, and determined all nilpotent elements in $r(H_{n,d})$ (see \cite{LZ}). Su and Yang (see \cite{SY2}) characterized the representation ring of small quantum group $\bar{U}_q{(sl_2)}$
by generators and relations. It turns out that the representation ring of $\bar{U}_q{(sl_2)}$ is generated by infinitely many generators subject to a family of generating relations.

The concept of weak Hopf algebra in the sense of Li was introduced by \cite{LF} in 1998 as a generalization of Hopf algebra. Since then, many weak Hopf algebras or weak quantum groups were constructed, for example, Aizawa and Isaac (\cite{AIS}) constructed weak Hopf algebras corresponding to $U_q (sl_n)$ and Yang (\cite{YANG1}) constructed weak Hopf algebras $\w^d_q (\mathfrak{g})$ corresponding to quantized enveloping algebras $U_q (\mathfrak{g})$ of a finite dimensional semisimple Lie algebra $\mathfrak{g}$. In \cite{SY}, Su and Yang constructed the weak Hopf algebra $\widetilde{H_8}$ corresponding to the non-commutative and non-cocommutative semisimple Hopf algebra $H_8$ of dimension 8. They described  the representation ring of $\widetilde{H_8}$ and studied the automorphism group of $r(\widetilde{H_8})$. In \cite{SY1}, Su and Yang
studied the Green ring of the weak Generalized Taft Hopf algebra $r(\mathfrak{w}^{s}(H_{n,d}))$, showing that the Green ring of the weak Generalized Taft Hopf algebra was much more complicated than its Grothendick ring.

In the present paper, it is shown that $H_{4n}$ is quasi-triangular, which universal $R$-matrices are described.
the weak Hopf algebras $\w H_{4n}$ and $\w H_{4n}^*$ corresponding to the Hopf algebra $H_{4n}$ and its dual $H_{4n}^*$ are constructed.  Then their representations and Green rings are explicitly described.
It turns out that the Green rings of the associated weak Hopf algebras are not commutative even if the
Green rings of $H_{4n}$ are commutative.

The paper is organized as follows. In Section 1, the definition of $H_{4n}$ by generators and relations
is described first, then we prove that $H_{4n}$ is quasitriangular and describe all universal $R$-matrices $R$ explicitly. In Section 2, we compute the Green ring $r(H_{4n})$. In Section 3, we construct the weak Hopf algebra $\w H_{4n}$ associated to $H_{4n}$. In Section 4, we study the representation ring $r(\w H_{4n})$ of $\w H_{4n}$ by generators and relations explicitly. In Section 5, we consider the dual Hopf algebra $H_{4n}^*$
and its weak Hopf algebra $\w H_{4n}^*$, we also describe the representation rings
$r(H_{4n}^*)$ and $r(\w H_{4n}^*)$.


Throughout this paper, we work over an algebraically closed field $k$ of characteristic zero. For the theory of
Hopf algebras and quantum groups, we refer to \cite{MONT,SW,KA,MA}.

\section{The non-semisimple non-pointed Hopf algebra $H_{4n}$}

First of all, let us give the defintion of the Hopf algebra $H_{4n}$.	
\begin{defn}\label{H4n}
Let $n\ge 1$ and $q$ be a primitive $2n$-th root of unity.	
The Hopf algebra $H_{4n}$ is  defined as follows.
As an algebra it generated by
$z, x$ with relations
$$z^{2n}=1, \quad zx=qxz,\quad  x^2=0$$
for any $a\in k$.

The coalgebra structure is
\begin{eqnarray*}
  &&\Delta(z)=z\otm z+a(1-q^{-2})z^{n+1}x\otm zx, \quad \Delta(x)=x\otm 1+z^n\otm x; \\
  &&\epsilon(z)=1, \quad \epsilon(x)=0,\\
 &&S(z)=z^{-1}, \quad  S(x)=-z^nx.
\end{eqnarray*}
\end{defn}	
One sees that $H_4$ is just the $4$-dimensional Sweedler$'$s Hopf algebra  when
$n=1$. It is well known
that $H_4$ is quasi-triangular with universal $R$-matrix
$$R=\frac{1}{2}\left(1\otm 1+1\otm z+z\otm 1+z\otm z\right)
+\alpha\left(x\otm x-x\otm zg+zx\otm x+zx\otm zx\right)$$
for any $\alpha\in k$.

In the sequel, we always assume that $n>1$.  Therefore $q^2\ne 1$ and
$$\Delta(z^i)=z^i\otm z^i+a(1-q^{-2})(1+q^{-2}+\cdots +q^{-2(i-1)})z^{n+i}x\otm z^ix.$$
Let $C_i$ be the $k$-space spanned by $z^i, \ z^{n+i}x, \ z^ix, \ z^{n+i} (1\leq i\leq n-1)$  and
$$T_4=k1\oplus kz^n\oplus kz^nx\oplus kx.$$
\begin{lem} If $a\ne 0$, then
$C_i$ is a simple subcoalgebra and as coalgebras
$$H_{4n}=\bigoplus_{i=1}^{n-1}C_i\oplus T_4$$
and $T_4\cong H_4$ as Hopf algebras.
\end{lem}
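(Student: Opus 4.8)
The plan is to prove the two claims separately: first that $H_{4n}$ decomposes as the direct sum of the subspaces $C_i$ and $T_4$ as coalgebras, and second that each $C_i$ is a simple subcoalgebra while $T_4 \cong H_4$ as Hopf algebras. I would begin by recording that $\{z^i, z^i x : 0 \le i \le 2n-1\}$ is a $k$-basis of $H_{4n}$, which follows from the defining relations $z^{2n}=1$, $zx=qxz$, $x^2=0$ (so $H_{4n}$ has dimension $4n$). The reindexing in the statement groups these basis vectors: for each $1 \le i \le n-1$ the space $C_i$ collects $z^i, z^{n+i}, z^i x, z^{n+i}x$, while $T_4$ collects the four remaining vectors $1, z^n, x, z^n x$. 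Since every one of the $4n$ basis vectors appears exactly once across the $C_i$ and $T_4$, the vector-space direct sum decomposition $H_{4n} = \bigoplus_{i=1}^{n-1} C_i \oplus T_4$ is immediate; the content is that it is a decomposition of \emph{coalgebras}.

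To upgrade this to a coalgebra decomposition, I would check that each summand is a subcoalgebra, i.e. that $\Delta$ maps it into its own tensor square and $\epsilon$ is defined on it. Using the general comultiplication formula $\Delta(z^i) = z^i \otm z^i + a(1-q^{-2})(1 + q^{-2} + \cdots + q^{-2(i-1)}) z^{n+i}x \otm z^i x$ together with $\Delta(x) = x\otm 1 + z^n \otm x$, one computes $\Delta$ on each of the four spanning vectors of $C_i$ and verifies the outputs lie in $C_i \otm C_i$. The key computations are $\Delta(z^{n+i}x)$ and $\Delta(z^i x)$, obtained by multiplying $\Delta(z^{n+i})$ or $\Delta(z^i)$ by $\Delta(x)$ and simplifying with the commutation relation and $x^2=0$; the terms involving $x^2$ vanish, which is exactly what keeps the result inside the four-dimensional $C_i \otm C_i$. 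The analogous verification for $T_4$ is the $i=0,\,i=n$ case and is where one sees $T_4$ is closed under $\Delta$, $\epsilon$, $S$, and the multiplication, hence a Hopf subalgebra.

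For the simplicity of $C_i$, I would exhibit an explicit coalgebra isomorphism $C_i \cong M_2(k)^*$, the dual of the $2\times 2$ matrix algebra, equivalently show that the four spanning vectors behave like a comatrix basis: one seeks elements $e_{jk}$ ($1 \le j,k \le 2$) spanning $C_i$ with $\Delta(e_{jk}) = \sum_l e_{jl} \otm e_{lk}$ and $\epsilon(e_{jk}) = \delta_{jk}$, after possibly rescaling $z^i x$ and $z^{n+i} x$ by suitable constants to absorb the factor $a(1-q^{-2})(1+\cdots+q^{-2(i-1)})$. Since a matrix coalgebra is simple (its dual is the simple algebra $M_2(k)$), this identifies $C_i$ as a simple subcoalgebra. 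The isomorphism $T_4 \cong H_4$ follows by sending $z^n \mapsto g$ and $x \mapsto y$ where $g, y$ are the standard Sweedler generators, and checking the relations $g^2 = 1$, $y^2 = 0$, $gy = -yg$ (note $q^n = -1$ since $q$ is a primitive $2n$-th root of unity) together with $\Delta(g) = g\otm g$, $\Delta(y) = y \otm 1 + g \otm y$ transport correctly.

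The main obstacle I anticipate is the simplicity claim for $C_i$: verifying that the four vectors genuinely form a comatrix coalgebra requires choosing the rescaling constant correctly so that the comultiplication takes the precise matrix form $\Delta(e_{jk}) = \sum_l e_{jl} \otm e_{lk}$, and this depends delicately on the coefficient $a(1-q^{-2})(1 + q^{-2} + \cdots + q^{-2(i-1)})$ being nonzero. I would need $a \ne 0$ and the geometric sum $1 + q^{-2} + \cdots + q^{-2(i-1)}$ to be nonzero for each $1 \le i \le n-1$; the latter equals $\frac{1 - q^{-2i}}{1 - q^{-2}}$, which is nonzero precisely because $q$ has order $2n$ so $q^{-2i} \ne 1$ for $i$ in this range. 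Once that nonvanishing is in hand, the rescaling is forced and the comatrix identities fall out from the comultiplication formulas already computed; the remaining verifications are routine bookkeeping with the commutation relation.
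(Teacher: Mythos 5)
Your proposal is correct and supplies precisely the verification that the paper omits (its proof is just ``It is straightforward''): the basis count gives the vector-space decomposition, the computation of $\Delta$ on $z^i$, $z^{n+i}$, $z^ix$, $z^{n+i}x$ (with $x^2=0$ killing the cross terms) shows each $C_i$ and $T_4$ is a subcoalgebra, the rescaled comatrix basis identifies $C_i$ with $M_2(k)^*$, and $z^n\mapsto g$, $x\mapsto y$ gives $T_4\cong H_4$ since $q^n=-1$. Your caveat about nonvanishing is exactly on target: the coefficient in $\Delta(z^i)$ telescopes to $a(1-q^{-2i})$, which for $1\le i\le n-1$ is nonzero precisely when $a\neq 0$ (and vanishes for $i=n$, which is why $z^n$ is group-like and $T_4$ is the Sweedler algebra rather than a matrix coalgebra), so the simplicity claim for $C_i$ tacitly assumes $a\neq 0$ --- consistent with the paper's subsequent remark that $H_{4n}$ is non-pointed only when $a\neq 0$.
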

\begin{proof} It is  straightforward.
\end{proof}

It follows that if $a\ne 0$, the Hopf algebra $H_{4n} (n\ge 2)$ is not pointed.
\begin{exam}\label{8-dim}
If $q$ is $4$-th primitive root of unity and $a=2$, then $H_8$ is just
the unique neither pointed nor semisimple $8$-dimensional Hopf algebra $(A_{C_4}^{\prime\prime})^\ast$
 {\rm(see \cite{Wa})} up to isomorphism.
\end{exam}

\begin{exam}\label{12-dim}
If $q$ is $6$-th primitive root of unity and $a\ne 0$, then $H_{12}$ is just
the  unique neither pointed nor semisimple $12$-dimensional Hopf algebra $A_{1}^\ast$ \rm{(see \cite{NAT})}.
\end{exam}

By \cite[Lemma 2.2, Theorem 2.1]{YANG2}, $H_{4n}$ is a Nakayama algebra with $2n$ cyclic orientation and cyclic relations of length $2$. In particular, it is of finite representation type.

For every integer $j$, we set $$E_j = \frac{1}{2n} \sum_{i=0}^{2n-1} q^{-ij} z^i.$$
	
It is easy to see that $E_0, \hdots, E_{2n-1}$ list the distinct $E_i's$. Moreover, for  $0\leq j,k < 2n$, we have
	\begin{equation}\label{multx} E_jz^ k  = \frac{1}{n}\sum_{i=0}^{2n-1} q^{-ij} z^{i+k} =
	q^{jk} \left(\frac{1}{n}\sum_{i=0}^{2n-1} q^{-(i+k)j} z^{i+k}\right) =q^{jk}E_j.
\end{equation}
and
$$xE_i=E_{i+1}x.$$
	
	\begin{lem}\label{completeset}
		$\{E_0, \cdots, E_{2n-1}\}$ is a complete set of orthogonal idempotents of $H_{4n}$.
	\end{lem}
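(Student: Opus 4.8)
The plan is to verify directly the three axioms defining a complete set of orthogonal idempotents, namely $E_j^2=E_j$, $E_jE_m=0$ for $j\ne m$, and $\sum_{j=0}^{2n-1}E_j=1$, reducing each identity to a single geometric sum over the $2n$-th roots of unity. The only computational input I would need is the multiplication rule $E_jz^k=q^{jk}E_j$ from (\ref{multx}); expanding one factor of $E_m$ (or $E_j$) into its defining sum $\frac{1}{2n}\sum_k q^{-km}z^k$ and applying this rule termwise immediately collapses every product $E_jE_m$ into a scalar multiple of the single idempotent $E_j$.

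Concretely, for idempotency I would compute
\begin{equation*}
E_j^2=\frac{1}{2n}\sum_{k=0}^{2n-1}q^{-kj}E_jz^k=\frac{1}{2n}\sum_{k=0}^{2n-1}q^{-kj}q^{jk}E_j=E_j,
\end{equation*}
since each of the $2n$ summands reduces to $\frac{1}{2n}E_j$. For orthogonality ($j\ne m$) the same substitution produces
\begin{equation*}
E_jE_m=\frac{1}{2n}\Bigl(\sum_{k=0}^{2n-1}q^{k(j-m)}\Bigr)E_j,
\end{equation*}
and the inner geometric sum vanishes because $q$ is a primitive $2n$-th root of unity: the numerator $q^{2n(j-m)}-1$ is zero while the denominator $q^{j-m}-1$ is nonzero for $0<|j-m|<2n$. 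Finally, for completeness I would sum over $j$ instead of expanding a product, obtaining $\sum_{j}E_j=\frac{1}{2n}\sum_{k}\bigl(\sum_{j}q^{-kj}\bigr)z^k$, where the inner sum equals $2n$ when $k=0$ and $0$ otherwise, so only the $k=0$ term survives and gives $1$.

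Together with the remark preceding the statement that $E_0,\dots,E_{2n-1}$ are pairwise distinct, these three identities are exactly what the lemma asserts. I do not expect a genuine obstacle here: this is the classical orthogonality-of-characters computation for the cyclic group $\langle z\rangle\cong\Z/2n\Z$, and the only point demanding care is tracking the primitivity of $q$, so that the relevant exponent sums vanish on precisely the right range and the normalizing factor $\tfrac{1}{2n}$ comes out correctly.
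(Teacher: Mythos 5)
Your proposal is correct and follows essentially the same route as the paper: expand one factor of the product via the defining sum, apply $E_jz^k=q^{jk}E_j$, and reduce both orthogonality and completeness to geometric sums over $2n$-th roots of unity. The only difference is cosmetic --- you separate idempotency from orthogonality, whereas the paper treats $E_jE_l$ in one case split --- and your version cleanly fixes a couple of summation-range typos present in the paper's displayed computation.
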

	\begin{proof}
		Since $q^{-j}$ is also an $2n$-th root of unity different from $1$ if $j\neq 0$, we get
		$$ \sum_{i=0}^{n-1} E_i = \frac{1}{2n} \sum_{i=0}^{2n-1}\sum_{j=0}^{2n-1} q^{-ij}z^j
		= \frac{1}{2n}\sum_{j=0}^{2n-1}  \left( \sum_{i=0}^{n-1}\left(q^{-j}\right)^i \right) z^j = 1,$$
		
		Also, using (\ref{multx}), for $0\leq l, j <2n$:
		$$ E_jE_l = \frac{1}{2n}\sum_{k=0}^{2n-1} q^{-lk} E_jz^k = \frac{1}{2n}\sum_{k=0}^{2n-1} q^{-lk+jk}E_j = \frac{1}{2n} \sum_{k=0}^{2n-1} \left(q^{j-l}\right)^k E_j = \left\{\begin{array}{cc} E_j & \mbox{if } l=j\\ 0 & \mbox{if } l\neq j\end{array}\right.$$
		
		Hence, $\{E_0, \cdots, E_{2n-1}\}$ is a complete set of orthogonal idempotents of $H_{4n}$.
	\end{proof}

Quasi-triangular Hopf algebras play an important in the theory of Hopf algebras and quantum groups, since they provide solutions to quantum Yang-Baxter equations. People try to construct quasi-triangular Hopf algebras and get a lot of results(see \cite{Wa,NA,CW,WL,LW2}). In this section, we shall show that $H_{4n}$ is quasitriangular and give all universal $R$-matrices explicitly. First, we recall the definition of quasi-triangular Hopf algebra.

Let $H$ be a finite dimensional Hopf algebra and $R\in H\otimes H$ an invertible element. The pair
$(H, R)$ is said to be a quasi-triangular Hopf algebra and $R$ is said to be a universal $R$-matrix of $H$, if the following three conditions are satisfied.
\begin{itemize}
  \item[(i)] $\Delta^{\prime}(h)=R\Delta(h) R^{-1},$ for all $h\in H$;
  \item[(ii)] $(\Delta\otimes id) (R)=R_{13}R_{23}$;
  \item[(iii)] $(id \otimes \Delta ) (R)=R_{13}R_{12}$;
\end{itemize}
Here $\Delta^{\prime}=T\circ\Delta, T: H\otimes H\to H\otimes H, T(a\otimes b)=b\otimes a$, and $R_{ij}\in H\otimes H\otimes H$ is given
by $R_{12}=R\otimes 1$, $R_{23}=1\otimes R$,  $R_{13}=(T\otimes id)(R_{23})$.

\begin{thm} $H_{4n}$ is a quasi-triangular Hopf algebra with universal $R$-matrix
$$R=\sum_{i, j=0}^{2n-1} (-1)^{ij}E_i\otm E_j+2a\sum_{i, j=0}^{2n-1}(-1)^{i(j+1)}E_i x\otm E_j x.$$
\end{thm}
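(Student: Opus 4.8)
The plan is to verify the three defining conditions of a quasi-triangular structure directly on the proposed $R$-matrix, exploiting the idempotent decomposition furnished by Lemma~\ref{completeset}. First I would record the basic multiplication rules in the orthogonal-idempotent basis: we have $z = \sum_{j} q^{j} E_j$ from the definition of the $E_j$, together with $E_j z^k = q^{jk} E_j$ and the commutation rule $x E_i = E_{i+1} x$ derived in the excerpt, from which one also gets $E_i x = x E_{i-1}$ and $x E_i x = E_{i+1} x^2 = 0$ since $x^2 = 0$. These identities, plus $E_i E_j = \delta_{ij} E_i$, are the only computational inputs needed, so the proof reduces to bookkeeping with geometric sums of powers of $q$.

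Next I would split $R$ as $R = R_0 + R_1$, where $R_0 = \sum_{i,j}(-1)^{ij} E_i \otimes E_j$ is the ``diagonal'' part and $R_1 = 2a \sum_{i,j}(-1)^{i(j+1)} E_i x \otimes E_j x$ is the ``nilpotent'' part, and check invertibility: since $R_1$ is built from the nilpotent $x$, it should satisfy $R_1^2 = 0$ (each factor carries two copies of $x$ landing on the same tensor slot), so $R^{-1} = R_0^{-1}(1 - R_1 R_0^{-1})$ with $R_0$ invertible because it is a sum over a complete idempotent system with invertible scalar coefficients $(-1)^{ij} = \pm 1$. For condition (i), the compatibility $\Delta'(h) = R\,\Delta(h)\,R^{-1}$, it suffices to check $h = z$ and $h = x$ since these generate $H_{4n}$; here I would feed the given coproducts $\Delta(z) = z\otimes z + a(1-q^{-2}) z^{n+1}x \otimes zx$ and $\Delta(x) = x\otimes 1 + z^n \otimes x$ through the conjugation, converting everything into the $E_i$ basis and using the commutation rules to push all $x$'s to the right.

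For conditions (ii) and (iii) I would expand $(\Delta \otimes \mathrm{id})(R)$ and $(\mathrm{id}\otimes\Delta)(R)$ by applying $\Delta$ to each $E_i$ and each $E_i x$, then compare with the products $R_{13}R_{23}$ and $R_{13}R_{12}$ after writing those products in the idempotent basis as well. The key auxiliary computation is a formula for $\Delta(E_i)$ and $\Delta(E_i x)$: applying $\Delta$ to the defining sum and using $\Delta(z^i)$ from the excerpt yields a ``grouplike-like'' leading term $\sum_{k} E_{i-k}\otimes E_k$ plus a correction proportional to $a$ coming from the $z^{n+i}x\otimes z^i x$ tails, and $\Delta(E_i x)$ follows from $\Delta(x)$ and multiplicativity. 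The matching of the two sides then collapses to verifying scalar identities of the form $\sum_k (-1)^{(i-k)j}(-1)^{kj} = (-1)^{ij}$ and their twisted analogues, which hold because $(-1)^{(i-k)j + kj} = (-1)^{ij}$ on the nose.

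The main obstacle will be the interaction of the nilpotent part $R_1$ with the non-grouplike correction term $a(1-q^{-2}) z^{n+1}x\otimes zx$ in $\Delta(z)$: these are exactly the terms that make $H_{4n}$ neither cocommutative nor built on a grouplike bialgebra, and the coefficient $2a$ in $R_1$ is presumably tuned precisely so that the $a$-linear contributions in conditions (i)--(iii) cancel against those coming from the deformed coproduct. I expect the $a^2$ terms to vanish automatically via $x^2 = 0$, so the real work is confirming that the first-order-in-$a$ terms balance; careful tracking of the index shifts $i \mapsto i+1$ induced by $xE_i = E_{i+1}x$ is where sign and index errors would most easily creep in. Once those cross-terms are shown to match, all three axioms follow and $(H_{4n}, R)$ is quasi-triangular.
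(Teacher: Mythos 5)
Your plan would prove the theorem, but it is organized in the opposite direction from the paper's argument. You verify \emph{sufficiency} throughout: you take the stated $R$, split it as $R_0+R_1$, invert it using $R_1^2=0$ and the invertibility of $R_0$, check the intertwining condition (i) only on the generators $z,x$ (legitimate, since $\Delta'$ and $R\Delta(\cdot)R^{-1}$ are both algebra maps), and then verify (ii) and (iii) through explicit formulas for $\Delta(E_i)$ and $\Delta(E_ix)$. The paper instead runs condition (i) in the \emph{necessity} direction: starting from an arbitrary universal $R$-matrix it uses $\Delta^{\mathrm{cop}}(z^n)R=R\Delta(z^n)$ to force $R\in T\otimes T+(T\otimes T)(x\otimes x)$, then the counit normalization together with $\Delta^{\mathrm{cop}}(x)R=R\Delta(x)$ and $\Delta^{\mathrm{cop}}(z)R=R\Delta(z)$ to solve uniquely for the coefficients $a_{ij}=(-1)^{ij}$ and $b_{ij}=2a(-1)^{ij+i}$, and only then asserts (ii) and (iii) by direct computation. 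The paper's route therefore also delivers \emph{uniqueness} of the universal $R$-matrix (which is what the introduction means by describing all $R$-matrices), something your verification-only route does not give; conversely, your route makes explicit the coproducts of the idempotents, which the paper leaves unstated. Two cautions on your sketch: the displayed identity should be read termwise, $(-1)^{(i-k)j}(-1)^{kj}=(-1)^{ij}$ for each fixed $k$ (the sum over $k$ labels the middle tensor factor and does not collapse to a single term); and the $a$-correction in $\Delta(E_i)$, supported on $(\cdot)x\otimes(\cdot)x$, produces terms of $(\Delta\otimes\mathrm{id})(R_0)$ carrying $x$ in slots $1$ and $2$ only, which have no counterpart anywhere in $R_{13}R_{23}$ and must be shown to cancel among themselves (they do, after reindexing $i\mapsto i+2$), so expect an internal cancellation rather than a term-by-term match there.
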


\begin{proof}
Note that here $n>1$ and  $q^2\ne 1$.

Let $R\in H_{4n}\otimes H_{4n}$ be a universal $R$-matrix, and $T=k\lra{z|z^{2n}=1}$.
First of all, we claim that
$$R\in T\otm T+(T\otm T)(x\otm x).$$
Indeed, we assume that
$$R=\sum_{h\in T} h\otm X_h+\sum_{h\in T} hx\otm Y_h, X_h, Y_h\in H_{4n}.$$
Note that
$\Delta(z^n)=z^n\otm z^n$
and $\Delta^{\rm cop}(z^n)R=R\Delta(z^n), $
The relations $zx=qxz$ implies that $xz^n=-z^n x.$
From this relation, it follows that $X_h\in T$ and $Y_h\in Tx.$ Hence
$R$ can be written as
$R=R'+\hat{R}$ where $R'\in T\otm T$ and $\hat{R}\in (T\otm T)(x\otm x).$
Let
 $$R'=\sum_{i, j=0}^{2n-1} a_{ij} E_i\otm E_j\in T\otm T.$$
Note that $(\epsilon\otm id)(\hat{R})=0$ and
 $(\epsilon\otm id)(R)=1$, therefore $(\epsilon\otm id)(R')=1$. Thus we have
 $$a_{i0}=a_{0j}=1  \hbox {\quad for all  \quad }  i, j=0, 1, \cdots, 2n-1.$$
Moreover, since $\Delta^{cop}(x)R=R\Delta(x)$, and $\Delta^{cop}(x)\hat{R}=0=\hat{R}\Delta(x)$,
we see that $\Delta^{cop}(x)R'=R'\Delta(x)$,
 \begin{eqnarray*}
   \sum_{i,j=0}^{2n-1} a_{ij} E_i\otm xE_j+\sum_{i,j=0}^{2n-1} a_{ij} xE_i\otm z^nE_j
    =\sum_{i,j=0}^{2n-1} a_{ij} E_ix\otm E_j+\sum_{i,j=0}^{2n-1} a_{ij} E_iz^n\otm E_jx
 \end{eqnarray*}
 Hence we get
 \begin{eqnarray*}
   \sum_{i,j=0}^{2n-1} a_{ij} E_i\otm E_{j+1}x+\sum_{i,j=0}^{2n-1} (-1)^j a_{ij} E_{i+1}x\otm E_j
    =\sum_{i,j=0}^{2n-1} (-1)^ia_{ij} E_i\otm E_jx+\sum_{i,j=0}^{2n-1} a_{ij} E_ix\otm E_j.
 \end{eqnarray*}
 This implies that
 $$a_{i,j-1}=(-1)^i a_{ij}, \mbox{ and }  a_{i-1,j}=(-1)^j a_{ij}$$
and we have $a_{ij}=(-1)^{ij}.$
 Then
 any universal $R$-matrix $R$ of $H_{4n}$ can be expressed by
 $$R=\sum_{i, j=0}^{2n-1} (-1)^{ij} E_i\otm E_j+ \hat{R},$$
 where $\hat{R}$ can be written as
 $$\hat{R}=\sum_{i, j=0}^{2n-1}b_{ij} E_ix\otm E_j x,  \quad b_{ij}\in k.$$
It is noted that
$$\Delta(z)=(1\otm 1+a(q^2-1)z^nx\otm x)(z\otm z).$$
Compute both side of the equation
$$\Delta^{cop}(z)R=R\Delta(z),$$
then it is straightforward to see that
the left hand side is
$$\sum_{i, j=0}^{2n-1} (-1)^{ij} q^{i+j} E_i\otm E_j+
\sum_{i, j=0}^{2n-1} \left[a(q^2-1)(-1)^{(i-1)(j-1)+j}q^{i+j-2}+b_{ij}q^{i+j}\right] E_ix\otm E_jx,$$
and the right hand side is
$$\sum_{i, j=0}^{2n-1} (-1)^{ij} q^{i+j} E_i\otm E_j+
\sum_{i, j=0}^{2n-1} \left[a(q^2-1)(-1)^{i+ij}q^{i+j-2}+b_{ij}q^{i+j-2}\right] E_ix\otm E_jx.$$
Comparing the two-hand side of the above equation, we have
$$a(q^2-1)(-1)^{(i-1)(j-1)+j}q^{i+j-2}+b_{ij}q^{i+j}=a(q^2-1)(-1)^{i+ij}q^{i+j-2}+b_{ij}q^{i+j-2},$$
and we get
$$b_{ij}=2a(-1)^{ij+i}.$$
Hence, if $R$ is a universal $R$-matrix of $H_{4n}$, then $R$ must be equal to
$$R=\sum_{i, j=0}^{2n-1} (-1)^{ij}E_i\otm E_j+2a\sum_{i, j=0}^{2n-1}(-1)^{i(j+1)}E_i x\otm E_j x.$$
By direct computations we see that $\left(\Delta\otm id\right)(R)=R_{13}R_{23}$ and $\left(id\otm\Delta\right)(R)=R_{13}R_{12}$. Hence  $R$ is a universal $R$-matrix of $H_{4n}$.
\end{proof}

\section{Indecomposable representations of $H_{4n}$}
From this section, we always assume that $a\neq 0$ in Definition \ref{H4n}. The situation for $a=0$ can be considered similarly. Let $H=H_{4n}$ and $M_i$ be the $2$-dimensional cyclic $H$-module with bases $\{v_{1}^i, v_{2}^i\}$, where $i\in\Z_{2n}$. The multiplication of $x$ and
$z$ in $H$ provides the actions on $M_i$, that is
 \begin{eqnarray*}
x(v_1^{i},v_2^{i})&=&(v_1^{i},v_2^{i})\left(
                                          \begin{array}{cc}
                                            0 & 0 \\
                                            1 & 0 \\
                                          \end{array}
                                        \right),\\
z(v_1^{i},v_2^{i})&=&(v_1^{i},v_2^{i})\left(
                                          \begin{array}{cc}
                                            q^{i} &0 \\
                                            0 & q^{i+1} \\
                                          \end{array}
                                        \right).
\end{eqnarray*}
 For any $i\in\Z_{2n}$, let $S_i$ be the $1$-dimensional cyclic $H$-module with base $\{v_i\}$, with the action $x\cdot v_i=0, z\cdot v_i=q^iv_i.$ Up to isomorphism, $\{M_i| i\in\Z_{2n}\}$ provides the complete list of isomorphism classes of indecomposable $H$-modules with two dimension.
Then we have the following decomposition formulas of
the tensor product of two indecomposable $H$-modules.
\begin{thm}\label{thm3-1} Let $i,j\in \Z_{2n}$, then as $H$-modules, we have
\begin{enumerate}
  \item
  $S_i \otimes S_j\cong S_{i+j(\mathrm{mod}2n)}.$
\item
  $S_i \otimes M_j\cong M_{i+j(\mathrm{mod}2n)}.$
   \item
    $M_i \otimes M_j\cong M_{i+j(\mathrm{mod}2n)}\oplus M_{i+j+1(\mathrm{mod}2n)}.$
\end{enumerate}
\end{thm}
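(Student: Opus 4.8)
The plan is to compute, for each pair of indecomposables, the action of $H=H_{4n}$ on the tensor product through the comultiplication, $h\cdot(u\otimes v)=\sum h_{(1)}u\otimes h_{(2)}v$, using the explicit formulas $\Delta(x)=x\otimes 1+z^n\otimes x$ and $\Delta(z)=z\otimes z+a(1-q^{-2})z^{n+1}x\otimes zx$ from Definition \ref{H4n}, and then to identify the resulting module. The single identity that drives everything is $q^n=-1$ (since $q$ is a primitive $2n$-th root of unity), so that $z^n$ acts as the scalar $(-1)^m$ on any vector of $z$-weight $q^m$; in particular $z^n v_1^i=(-1)^iv_1^i$ and $z^n v_2^i=(-1)^{i+1}v_2^i$.

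Parts (1) and (2) are routine. On the one-dimensional space $S_i\otimes S_j=\langle v_i\otimes v_j\rangle$ both terms of $\Delta(x)$ kill the vector and the second term of $\Delta(z)$ vanishes (as $xv_i=0$), giving $x\cdot(v_i\otimes v_j)=0$ and $z\cdot(v_i\otimes v_j)=q^{i+j}(v_i\otimes v_j)$, which is exactly $S_{i+j}$. For (2) I would compute the action on $v_i\otimes v_1^j$ and $v_i\otimes v_2^j$; the $a(1-q^{-2})(\cdots)$ term of $\Delta(z)$ again drops out because $xv_i=0$, and after the sign twist $w_1=v_i\otimes v_1^j$, $w_2=(-1)^iv_i\otimes v_2^j$ one reads off precisely the defining relations of $M_{i+j}$.

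The substance is in (3). Writing $e_{11},e_{12},e_{21},e_{22}$ for $v_1^i\otimes v_1^j,\ v_1^i\otimes v_2^j,\ v_2^i\otimes v_1^j,\ v_2^i\otimes v_2^j$, a direct calculation gives $x e_{11}=e_{21}+(-1)^ie_{12}$, $x e_{12}=e_{22}$, $x e_{21}=(-1)^{i+1}e_{22}$, $x e_{22}=0$, while $z$ acts with weight $q^{i+j+1}$ on $e_{12},e_{21}$ and weight $q^{i+j+2}$ on $e_{22}$, but \emph{not} diagonally on $e_{11}$: $z e_{11}=q^{i+j}e_{11}+a(q^2-1)(-1)^{i+1}q^{i+j}e_{22}$, the off-diagonal term coming exactly from the non-grouplike part of $\Delta(z)$. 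I expect this non-diagonality to be the main obstacle: one cannot simply read off the summands from $z$-eigenspaces. I would resolve it by correcting the highest-weight vector, setting $u_1=e_{11}+\alpha e_{22}$ and imposing $z u_1=q^{i+j}u_1$, which forces $\alpha=a(-1)^i$. Then $u_2:=x u_1=e_{21}+(-1)^ie_{12}$ has weight $q^{i+j+1}$ and satisfies $x u_2=0$, so $\langle u_1,u_2\rangle\cong M_{i+j}$; and $w_1:=e_{12}$, $w_2:=x w_1=e_{22}$ give $\langle w_1,w_2\rangle\cong M_{i+j+1}$.

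Finally I would check that $\{u_1,u_2,w_1,w_2\}$ is again a basis of $M_i\otimes M_j$ (immediate, since the change of basis from $\{e_{11},e_{12},e_{21},e_{22}\}$ is unitriangular), whence the two submodules meet only in $0$ and their sum is all of $M_i\otimes M_j$, giving $M_i\otimes M_j\cong M_{i+j}\oplus M_{i+j+1}$ with all indices read mod $2n$. As a consistency check, since each $M_k$ is projective-injective over the Nakayama algebra $H_{4n}$, the four-dimensional module $M_i\otimes M_j$ is projective and hence a sum of two of the $M_k$; the $z$-weights $q^{i+j},q^{i+j+1},q^{i+j+2}$ then pin down the summands as $M_{i+j}$ and $M_{i+j+1}$. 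The explicit generators above make the argument self-contained.
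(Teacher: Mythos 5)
Your proposal is correct and follows essentially the same route as the paper: the corrected vector $u_1=e_{11}+a(-1)^i e_{22}$ and its image $u_2=e_{21}+(-1)^ie_{12}$ are exactly the paper's $w_3,w_4$, and your $w_1=e_{12}$, $w_2=e_{22}$ are its $w_1,w_2$. The only cosmetic difference is that you derive the coefficient $\alpha=a(-1)^i$ by imposing the $z$-eigenvector condition, whereas the paper writes the basis down directly and verifies it.
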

\begin{proof}
Recall that
$\Delta(z)=z\otm z+a(1-q^{-2})z^{n+1}x\otm zx,$ and $\Delta(x)=x\otm 1+z^n\otm x$, for $i\in\Z_{2n}$, let $\sigma(i)=(-1)^i$, we have

(1) $x\cdot (v_i\otimes v_j)=0$, $z\cdot (v_i\otimes v_j)=q^{i+j}v_i\otimes v_j$,
therefore $S_i \otimes S_j\cong S_{i+j(\mathrm{mod}2n)}.$

(2) For $j,k\in\{1,2\}$ and $i\in\Z_{2n}$,
\begin{eqnarray*}
&&x\cdot (v_i\otimes v_{k}^j)=\left\{
\begin{array}{ll}
\sigma(i)v_i\otimes v_{2}^j, & k=1,   \\
0, & k=2.
\end{array}
\right.\\
&&z\cdot (v_i\otimes v_{k}^j)=\left\{
\begin{array}{ll}
q^{i+j}v_i\otimes v_{k}^j, & k=1,   \\
q^{i+j+1}v_i\otimes v_{k}^j, & k=2.
\end{array}
\right.
\end{eqnarray*}
so we have  $S_i \otimes M_j\cong M_{i+j(\mathrm{mod}2n)}.$

(3) For $k,l\in\{1,2\}$ and $i\in\Z_{2n}$, note that
 \begin{eqnarray*}
&&x\cdot (v_{1}^i\otimes v_{1}^j)=v_{2}^i\otimes v_{1}^j+\sigma (i)v_{1}^i\otimes v_{2}^j,\\
&&x\cdot (v_{1}^i\otimes v_{2}^j)=v_{2}^i\otimes v_{2}^j,\\
&&x\cdot (v_{2}^i\otimes v_{1}^j) =\sigma (i+1)v_{2}^i\otimes v_{2}^j,\\
&&x\cdot (v_{2}^i\otimes v_{2}^j) =0,\\
&&z\cdot (v_{k}^i\otimes v_{l}^j)=\left\{
                                    \begin{array}{ll}
                                      q^{i+j}\big(v_{1}^i\otimes v_{1}^j+a(q^2-1)\sigma(i+1) v_{2}^i\otimes v_{2}^j\big), & k+l=2 \\
                                      q^{i+j+1} v_{k}^i\otimes v_{l}^j, & k+l=3;\\
                                      q^{i+j+2} v_{2}^i\otimes v_{2}^j, & k+l=4.
                                    \end{array}
                                  \right.
\end{eqnarray*}
Let $w_{1}=v_{1}^i\otimes v_{2}^j$, $w_{2}=v_{2}^i\otimes v_{2}^j$, and
 \begin{eqnarray*}
w_{3}&=&v_{1}^i\otimes v_{1}^j-a\sigma(i+1)v_{2}^i\otimes v_{2}^j, \\
w_{4}&=&v_{2}^i\otimes v_{1}^j+\sigma(i)v_{1}^i\otimes v_{2}^j,
\end{eqnarray*}
then we have
  \begin{eqnarray*}
 x(w_{1},w_2)&=&(w_1,w_2)\left(
                                          \begin{array}{cc}
                                            0 & 0 \\
                                            1 & 0 \\
                                          \end{array}
                                        \right),\\
z(w_1,w_2)&=&(w_1,w_2)\left(
                                          \begin{array}{cc}
                                            q^{i+j+1} &0 \\
                                            0 & q^{i+j+2} \\
                                          \end{array}
                                        \right),
\end{eqnarray*} and
 \begin{eqnarray*}
 x(w_{3},w_4)&=&(w_3,w_4)\left(
                                          \begin{array}{cc}
                                            0 & 0 \\
                                            1 & 0 \\
                                          \end{array}
                                        \right),\\
z(w_3,w_4)&=&(w_3,w_4)\left(
                                          \begin{array}{cc}
                                            q^{i+j} &0 \\
                                            0 & q^{i+j+1} \\
                                          \end{array}
                                        \right).
\end{eqnarray*}
  Therefore, $M_i \otimes M_j\cong M_{i+j(\mathrm{mod}2n)}\oplus M_{i+j+1(\mathrm{mod}2n)}.$
\end{proof}
Let $H$ be a finite dimensional Hopf algebra and $M$ and $N$ be two finite dimensional $H$-modules.
Recall that the Green ring or the representation ring $r(H)$ of $H$ can be defined as follows. As a group $r(H)$ is the free Abelian group generated by the isomorphism classes of the finite dimensional $H$-modules $M$, modulo the relations $[M \oplus N ] = [M] + [N ]$. The multiplication of $r(H)$ is given by the tensor product of $H$-modules, that is, $[M][N]=[M\otimes N]$. Then $r(H)$ is an associative ring with identity given by
$[k_\varepsilon]$, the trivial 1-dimensional $H$-module. Note that $r(H)$ is a
free abelian group with a $\mathbb{Z}$-basis $\{[M ]|M \in ind(H)\}$, where $ind(H)$ denotes
the set of finite dimensional indecomposable $H$-modules.

Denote $[S_1]=b$, $[M_0]=c$.
\begin{cor} \label{cor3-2}
The Green ring $r(H_{4n})$ is a commutative ring generated by $b$ and $c$. The set $\{b^{k}\mid 0 \leq k \leq 2n-1\}\cup \{
b^{i}c \mid 1 \leq i\leq 2n-1\}$ forms a $\mathbb{Z}$-basis for $r(H_{4n})$.
\end{cor}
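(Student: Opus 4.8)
The plan is to read off each proposed basis element as the class of a single indecomposable module, using the decomposition formulas of Theorem~\ref{thm3-1}, and then to invoke the definition of the Green ring.

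First I would compute the powers of $b=[S_1]$. By Theorem~\ref{thm3-1}(1) we have $[S_i][S_j]=[S_{i+j}]$, so $b^k=[S_k]$ for every $k\ge 0$; in particular $b^0=[S_0]=[k_\varepsilon]$ is the identity and $b^{2n}=[S_{2n}]=[S_0]=1$. Hence $\{b^k\mid 0\le k\le 2n-1\}$ is exactly the set $\{[S_0],\dots,[S_{2n-1}]\}$ of classes of the one-dimensional indecomposables. Next, by Theorem~\ref{thm3-1}(2) we have $[S_i][M_0]=[M_i]$, so $b^ic=[M_i]$ for every $i$; the case $i=0$ gives $c=[M_0]$, and letting $i$ range over $\Z_{2n}$ produces all the classes $[M_0],\dots,[M_{2n-1}]$ of the two-dimensional indecomposables.

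Since $H_{4n}$ is a Nakayama algebra of finite representation type whose indecomposables have length at most $2$ (see \cite{YANG2}), the complete list of indecomposable modules is $\{S_i,M_i\mid i\in\Z_{2n}\}$, and by the very definition of $r(H_{4n})$ these $4n$ classes form a $\mathbb{Z}$-basis. The two computations above set up a bijection between the proposed monomials in $b$ and $c$ and this basis; therefore the proposed set is itself a $\mathbb{Z}$-basis and $b,c$ generate $r(H_{4n})$ as a ring.

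It remains to record commutativity. The three formulas of Theorem~\ref{thm3-1} are symmetric under interchanging $i$ and $j$, so $[M][N]=[N][M]$ for all indecomposables $M,N$; equivalently $bc=[M_1]=cb$, and since $b$ and $c$ generate the ring it is commutative. I expect no genuine obstacle here: every assertion is a formal consequence of Theorem~\ref{thm3-1}. The only point needing care is the completeness of the list of indecomposables, which guarantees both that the $4n$ monomials are all distinct (hence $\mathbb{Z}$-linearly independent) and that nothing outside them can occur; this is already furnished by the finite-representation-type structure cited above.
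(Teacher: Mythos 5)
Your identification of the basis and of the generation by $b$ and $c$ is essentially the paper's argument: $b^k=[S_k]$ and $b^{2n}=1$ from Theorem \ref{thm3-1}(1), $b^ic=[M_i]$ from Theorem \ref{thm3-1}(2), and the completeness of the list $\{S_i,\,M_i\mid i\in\Z_{2n}\}$ of indecomposables (which the paper gets from the Nakayama, finite-representation-type structure recalled at the end of Section 1) then gives the $\mathbb{Z}$-basis by the definition of the Green ring. (Your count of $4n$ monomials also quietly corrects the index range in the statement, which as printed lists only $4n-1$ elements; the version used later in the proof of Theorem \ref{thm3-2} has $0\leq i\leq 2n-1$.)

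The one genuine gap is in your commutativity argument. Theorem \ref{thm3-1}(2) computes only $S_i\otimes M_j$; it says nothing about $M_j\otimes S_i$. Interchanging $i$ and $j$ in that formula yields $S_j\otimes M_i\cong M_{i+j}$, which still has the one-dimensional factor on the left, so the ``symmetry under interchanging $i$ and $j$'' you invoke does not cover the mixed case and does not by itself give $[M_0][S_1]=[S_1][M_0]$, i.e.\ $cb=bc$ --- which is exactly the relation you need. The paper sidesteps this entirely: it deduces commutativity of $r(H_{4n})$ from the quasi-triangularity of $H_{4n}$ established in Section 1, which forces $M\otimes N\cong N\otimes M$ for all modules at once. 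Your route can be repaired either by citing that theorem or by the short direct computation that on $M_j\otimes S_i$ both coproducts $\Delta(x)=x\otimes 1+z^n\otimes x$ and $\Delta(z)=z\otimes z+a(1-q^{-2})z^{n+1}x\otimes zx$ act through their first summands only (since $x$ annihilates $v_i$), whence $M_j\otimes S_i\cong M_{i+j}\cong S_i\otimes M_j$.
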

\begin{proof}
Firstly, $r(H_{4n})$ is a commutative ring since $H_{4n}$ is a quasitriangular Hopf algebra. By Theorem \ref{thm3-1}, $b^{2n}=1$ and there is a one to one correspondence between the set $\{b^{i}\mid 0 \leq i \leq 2n-1\}$ and the set of one-dimensional simple $H_{4n}$ module $\{[S_{i}]\mid 0 \leq i \leq 2n-1\}$. Besides, for all $0 \leq i \leq 2n-1$, $[S_{i}]c=[M_{i}]$, hence $[M_{i}]=b^ic$ and all the two-dimensional simple $H_{4n}$ modules $\{[M_{i}]\mid 0 \leq i <j\leq n-1\}$ are obtained.
\end{proof}

\begin{thm}\label{thm3-2} The Green ring $r(H_{4n})$ is isomorphic to the quotient ring of the ring $\mathbb{Z}[x_1, x_2]$ module the ideal $I$ generated by the following elements
$$x_1^{2n}-1,\quad x_2^2-x_1x_2-x_2$$
\end{thm}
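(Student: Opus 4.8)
The plan is to construct an explicit ring homomorphism $\phi\colon\mathbb{Z}[x_1,x_2]\to r(H_{4n})$ determined by $x_1\mapsto b=[S_1]$ and $x_2\mapsto c=[M_0]$, show that it is surjective, that its kernel is exactly the ideal $I$, and thereby identify the quotient with $r(H_{4n})$. Surjectivity is immediate: by Corollary \ref{cor3-2} the ring $r(H_{4n})$ is generated as a ring by $b$ and $c$, so $\phi$ is onto.

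Next I would verify that both generators of $I$ lie in $\ker\phi$. For the first relation, Theorem \ref{thm3-1}(1) gives $b^{2n}=[S_1]^{2n}=[S_{2n\,(\mathrm{mod}\,2n)}]=[S_0]=1$, since $S_0$ is the trivial module; hence $\phi(x_1^{2n}-1)=0$. For the second relation, Theorem \ref{thm3-1}(3) with $i=j=0$ gives $c^2=[M_0\otm M_0]=[M_0]+[M_1]$, while Theorem \ref{thm3-1}(2) with $i=1,\ j=0$ gives $bc=[S_1][M_0]=[M_1]$; combining these yields $c^2=c+bc$, so that $\phi(x_2^2-x_1x_2-x_2)=0$. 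Therefore $I\subseteq\ker\phi$ and $\phi$ descends to a surjective ring homomorphism $\bar\phi\colon\mathbb{Z}[x_1,x_2]/I\to r(H_{4n})$.

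The heart of the argument is the injectivity of $\bar\phi$, which I would establish by a rank count. In the quotient the relation $x_2^2=(x_1+1)x_2$ allows any monomial $x_2^m$ with $m\ge1$ to be rewritten as $g(x_1)x_2$ for some $g\in\mathbb{Z}[x_1]$, so every class is represented in the form $f(x_1)+g(x_1)x_2$; the relation $x_1^{2n}=1$ then reduces $f$ and $g$ to degree $<2n$. Consequently the $4n$ classes $\{x_1^{k}\mid 0\le k\le 2n-1\}\cup\{x_1^{i}x_2\mid 0\le i\le 2n-1\}$ span $\mathbb{Z}[x_1,x_2]/I$ over $\mathbb{Z}$. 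Their images under $\bar\phi$ are precisely the elements $\{b^{k}\}\cup\{b^{i}c\}$ which, by Corollary \ref{cor3-2}, form a $\mathbb{Z}$-basis of $r(H_{4n})$ (a free abelian group of rank $4n$). Composing the spanning surjection $\mathbb{Z}^{4n}\twoheadrightarrow\mathbb{Z}[x_1,x_2]/I$ with $\bar\phi$ produces a map $\mathbb{Z}^{4n}\to r(H_{4n})$ that carries the standard basis onto a basis, hence an isomorphism; this forces the spanning surjection to be injective and $\bar\phi$ to be an isomorphism.

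The step I expect to be the main obstacle is the spanning/normal-form reduction together with the matching of ranks: one must confirm that the $4n$ reduced monomials are not merely a spanning set but map bijectively onto the basis of $r(H_{4n})$, so that no further hidden relation collapses the quotient below rank $4n$. Once the normal form and the rank equality are in place, the isomorphism follows formally, and everything else reduces to the routine verification of the two defining relations inside the Green ring carried out above.
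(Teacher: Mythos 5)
Your proof is correct and follows essentially the same route as the paper: both construct the ring epimorphism sending $x_1\mapsto b$, $x_2\mapsto c$, verify the two defining relations via Theorem \ref{thm3-1}, and deduce injectivity from the fact that the $4n$ reduced monomials span $\mathbb{Z}[x_1,x_2]/I$ and map onto the $\mathbb{Z}$-basis of $r(H_{4n})$. The only difference is cosmetic: the paper exhibits an explicit $\mathbb{Z}$-linear section $\Psi$ with $\Psi\overline{\Phi}=\mathrm{id}$, whereas you run a rank count; your version additionally spells out the normal-form reduction $x_2^m=(x_1+1)^{m-1}x_2$ that the paper only asserts.
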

\begin{proof}
By Corollary \ref{cor3-2}, $r(H_{4n})$ is generated by $b $ and $c$. Hence there is a unique ring epimorphism
$$\Phi: \mathbb{Z}\lr{x_1,x_2}\rightarrow r(H_{4n})$$
such that
$$\Phi(x_1)=b=[S_{1}],\quad \Phi(x_2)=c=[M_{0}].$$
Since
$$b^{2n}=1,\quad c^{2}=bc+c,\quad bc=cb,$$
we have
$$\Phi (x_1^{2n}-1)=0,\quad\Phi(x_2^2-x_1x_2-x_2)=0, \quad\Phi(x_1x_2-x_2x_1)=0.$$
It follows that $\Phi (I)=0,$ and $\Phi$ induces a ring epimorphism
$$\overline{\Phi}: \mathbb{Z}\lr{x_1,x_2}/I\rightarrow r(H_{4n}),$$
such that $\overline{\Phi}(\overline{v})=\Phi (v)$ for all $v\in \mathbb{Z}\lr{x_1,x_2}$, where $\overline{v}=\pi(v)$ (natural epimorphism $\pi: \mathbb{Z}\lr{x_1,x_2}\rightarrow \mathbb{Z}\lr{x_1,x_2}/I$).
As $r(H_{4n})$ is a free $\mathbb{Z}$-module of rank $4n$, with a $\mathbb{Z}$-basis $\{b^{i} \mid 0\leq i\leq 2n-1\}\cup \{b^{j}c\mid 0 \leq j \leq 2n-1\}$, we can define a $\mathbb{Z}$-module homomorphism:
$$\Psi: r(H_{4n})\rightarrow \mathbb{Z}\lr{x_1,x_2}/I,$$
$$b^{i}c\rightarrow \overline{x_1^{i}x_2}=\overline{x_1}^{i}\overline{x_2},~~ b^{j}\rightarrow \overline{x_1^{j}}=\overline{x_1}^{j},~ 1 \leq i,j\leq 2n-1.$$
Observe that as a free $\mathbb{Z}$-module, $\mathbb{Z}\lr{x_1,x_2}/I$ is generated by elements $\overline{x_1^{i}x_2}$ and $\overline{x_1^{j}}, 0 \leq i,j\leq 2n-1, $ we have
$$\Psi\overline{\Phi}(\overline{x_1^{i}x_2})=\Psi\Phi(x_1^{i}x_2)=\Psi(b^{i}c)=
\overline{x_1^{i}x_2},$$
$$\Psi\overline{\Phi}(\overline{x_1^{j}})=\Psi\Phi(x_1^{j})=\Psi(b^{j})=
\overline{x_1^{j}},$$
for all $0 \leq i,j\leq 2n-1.$ Hence $\Psi\overline{\Phi}=id$, and $\overline{\Phi}$ is injective. Thus, $\overline{\Phi}$ is a ring isomorphism.
\end{proof}
\section{Weak Hopf algebras corresponding to $H_{4n}$}
 Firstly, we recall the concept of weak Hopf algebra given by Li(see \cite{LF}). By definition, a weak Hopf algebra is $k$-bialgebra $H$ with a map $T\in \hom(H, H)$ such that
$T\ast id \ast T =T$ and $id\ast T\ast id  =id $, where $\ast$ is the convolution map in $\hom(H, H)$.

Let $\w H_{4n}$ be the algebra generated by
$Z, X$ with relations
$$Z^{2n+1}=Z, \quad ZX=qXZ,\quad  X^2=0.$$
\begin{thm}\label{thm4-1}
$\w H_{4n}$ is a noncommutative and noncocommutative weak Hopf algebra with comultiplication, counit and the weak antipode $T$ as follows
\begin{eqnarray*}
  &&\Delta(Z)=Z\otm Z+a(1-q^{-2})Z^{n+1}X\otm ZX, \quad \Delta(X)=X\otm 1+Z^n\otm X; \\
  &&\epsilon(Z)=1, \quad \epsilon(X)=0,\\
 &&T(Z)=Z^{2n-1}, \quad  T(X)=-Z^nX.
\end{eqnarray*}
\end{thm}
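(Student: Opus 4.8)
The plan is to check the two requirements of Li's definition separately: that $(\w H_{4n},\Delta,\epsilon)$ is a $k$-bialgebra, and that $T$ is a weak antipode, i.e. $id\ast T\ast id=id$ and $T\ast id\ast T=T$ in the convolution algebra $\hom(\w H_{4n},\w H_{4n})$. Non-commutativity is immediate from $ZX=qXZ$ with $q\neq 1$, and non-cocommutativity from $\Delta(X)=X\otm 1+Z^n\otm X\neq 1\otm X+X\otm Z^n=\Delta^{cop}(X)$.

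For the bialgebra structure I would first show that $\Delta$ and $\epsilon$ extend to algebra maps, which by the presentation of $\w H_{4n}$ reduces to checking that $\Delta(Z),\Delta(X)$ (resp. $\epsilon(Z),\epsilon(X)$) satisfy the three defining relations. The relations $\Delta(X)^2=0$ and $\Delta(Z)\Delta(X)=q\,\Delta(X)\Delta(Z)$ go through exactly as for $H_{4n}$, since they use only $X^2=0$, $ZX=qXZ$ and $q^n=-1$, none of which refers to the order of $Z$. The genuinely new relation is $\Delta(Z)^{2n+1}=\Delta(Z)$. To handle it I would use the closed formula (proved by the same induction recorded after Definition \ref{H4n}, whose inductive step needs only $ZX=qXZ$ and $X^2=0$)
\[
\Delta(Z^i)=Z^i\otm Z^i+a(1-q^{-2})\big(1+q^{-2}+\cdots+q^{-2(i-1)}\big)\,Z^{n+i}X\otm Z^iX .
\]
Setting $i=2n+1$ and reducing exponents by $Z^{2n+1}=Z$ (so $Z^{3n+1}=Z^{n+1}$ and $Z^{2n+1}=Z$) turns $\Delta(Z^{2n+1})=\Delta(Z)$ into the scalar identity $1+q^{-2}+\cdots+q^{-2\cdot 2n}=1$. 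Because $q$ is a primitive $2n$-th root of unity, $q^{-2}$ is a primitive $n$-th root of unity, so the $2n$ terms $q^{0},\dots,q^{-2(2n-1)}$ form two full periods and sum to $0$, while the final term $q^{-4n}=1$; hence the sum equals $1$. Coassociativity and the counit axioms are then verified on the generators $Z,X$ exactly as for $H_{4n}$, as these are bialgebra identities that never invoke $Z^{2n}=1$.

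For the weak antipode I would read $T$ as the algebra anti-endomorphism determined by $T(Z)=Z^{2n-1}$, $T(X)=-Z^nX$, first confirming it is well defined by checking it respects the three relations (e.g. $T(Z)^{2n+1}=Z^{(2n-1)(2n+1)}=Z^{2n-1}=T(Z)$, and $T(X)T(Z)=q\,T(Z)T(X)$ using $q^{2n}=1$). Fixing the basis $\{Z^i,\,Z^iX:0\le i\le 2n\}$ and recording $\Delta(Z^iX)=Z^iX\otm Z^i+Z^{n+i}\otm Z^iX$, I would compute the single convolution $P:=id\ast T$ on this basis. Writing $e:=Z^{2n}$, the outcome is $P(1)=1$, $P(Z^i)=e$ and $P(Z^iX)=0$ for $1\le i\le 2n$, together with the exceptional value $P(X)=(1-e)X$. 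By associativity of convolution $id\ast T\ast id=P\ast id$ and $T\ast id\ast T=T\ast P$, so it remains to evaluate $P\ast id$ and $T\ast P$ on each basis element and confirm they equal $id$ and $T$ respectively.

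The conceptual heart of the argument — and the reason $\w H_{4n}$ is only a weak Hopf algebra — is that $e=Z^{2n}$ is an idempotent which is not the unit: from $Z^{2n+1}=Z$ one gets $e^2=e$ and $eZ=Ze=Z$, yet $eX\neq X$ because $eX=Z^{2n}X$ is a basis vector distinct from $X=Z^0X$. Consequently $id\ast T$ is \emph{not} the trivial map $h\mapsto\epsilon(h)1$, and the whole verification hinges on the boundary index $i=0$: the uniform formula $T(Z^i)=Z^{2n-i}$ fails there since $T(1)=1\neq e$, and this is precisely what produces the correction $P(X)=X-eX$. That correction is indispensable, because it is the cancellation $(id\ast T\ast id)(X)=(X-eX)+eX=X$ that makes the first identity hold on $X$, and similarly $(T\ast id\ast T)(X)=-Z^nX+Z^n(X-eX)=-Z^{3n}X=-Z^nX=T(X)$ secures the second. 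I expect the main obstacle to be purely bookkeeping: tracking the exponent reductions under the rule $Z^{2n+k}=Z^k\ (k\ge 1)$ consistently through these convolutions and never overlooking the $i=0$ exceptions.
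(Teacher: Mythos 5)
Your proposal is correct, and for the bialgebra structure and the well-definedness of $T$ it coincides with the paper's proof: both check that $\Delta$, $\epsilon$ and $T$ preserve the three defining relations and verify coassociativity and the counit axioms on generators (your explicit reduction of $\Delta(Z)^{2n+1}=\Delta(Z)$ to the scalar identity $\sum_{j=0}^{2n}q^{-2j}=1$ is a welcome detail the paper dismisses as ``direct calculation''; note only that for $n=1$ this sum is not handled by the two-full-periods argument since $q^{-2}=1$, but there the coefficient $a(1-q^{-2})$ vanishes, so nothing is lost). Where you genuinely diverge is the verification that $T$ is a weak antipode. The paper checks $id\ast T\ast id=id$ and $T\ast id\ast T=T$ only on the generators $Z$ and $X$, computes $id\ast T(Z)=Z^{2n}$, $id\ast T(X)=X(1-Z^{2n})$ and $T\ast id(X)=0$, observes these values are central, and then invokes a multiplicativity argument (if both identities hold at $a$ and at $b$, they hold at $ab$) to extend to all of $\w H_{4n}$. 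You instead compute the single convolution $P=id\ast T$ on the full basis $\{Z^i,\,Z^iX\}$ and evaluate $P\ast id$ and $T\ast P$ on every basis element via associativity of $\ast$; your values $P(1)=1$, $P(Z^i)=Z^{2n}$, $P(Z^iX)=0$ for $i\geq 1$ and $P(X)=(1-Z^{2n})X$ are all correct, as is the formula $\Delta(Z^iX)=Z^iX\otimes Z^i+Z^{n+i}\otimes Z^iX$ on which they rest. Your route costs more exponent bookkeeping but is entirely self-contained and sidesteps the centrality-plus-multiplicativity lemma, which the paper states only in outline; the paper's route is shorter once that lemma is granted. Both arguments isolate the same crucial point, namely that $Z^{2n}$ is a non-unit idempotent with $Z^{2n}X\neq X$, which is exactly why $id\ast T\neq\epsilon(\cdot)1$ and why $\w H_{4n}$ is only a weak Hopf algebra.
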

\begin{proof}
Firstly, it can be shown by direct calculations that the following relations hold:
$$\Delta(Z)^{2n+1}=\Delta(Z), \quad \Delta(Z)\Delta(X)=q\Delta(X)\Delta(Z),\quad  \Delta(X)^2=0,$$
$$\epsilon(Z)^{2n+1}=\epsilon(Z), \quad \epsilon(Z)\epsilon(X)=q\epsilon(X)\epsilon(Z),\quad  \epsilon(X)^2=0,$$
Therefore, $\Delta$ and $\epsilon$ can be extended to algebra morphism from $\w H_{4n}$ to $\w H_{4n}\otimes \w H_{4n}$ and from $\w H_{4n}$ to $k$ respectively. We also have
$$(\Delta\otimes id)\Delta (Y)=(id\otimes\Delta)\Delta (Y),$$
$$(\epsilon\otimes id)\epsilon(Y)=(id\otimes\epsilon)\epsilon(Y)=Y$$
for $Y=X,Z$. It follows that $\w H_{4n}$ is a bialgebra.

Secondly, we prove that
in the bialgebra $\w H_{4n}$, the map $T$ can define a weak antipode in the natural way.
To see this, note that the map
$T: \w H_{4n}\to {\w H_{4n}}^{\rm op}$ keeps the defining relations:
$$(T(Z))^{2n+1}=((Z)^{2n-1})^{2n+1}=Z^{2n-1}=T(Z),$$
$$(T(X))^{2}=(-Z^{n}X)^{2}=0.$$
$$T(X)T(Z)=(-Z^{n}X)(Z)^{2n-1}=q^{1-2n}(Z)^{2n-1}(-Z^{n}x)=qT(Z)T(X).$$
It follows that the map $T$ can be extended to an anti-algebra homomorphism $T: \w H_{4n}\to \w H_{4n}$.
Besides, it is easy to see that in $\w H_{4n}$,
\begin{align*}
&(id\ast T\ast id)(Z)=ZT(Z)Z=Z^{2n+1}=Z=id(Z),\\
&(T\ast id\ast T)(Z)=T(Z)ZT(Z)=Z^{zn-1}=T(Z).
\end{align*}
and
\begin{align*}
(id\ast T\ast id)(X)=&\mu(id\otimes T\otimes id)(X\otimes 1\otimes 1+Z^n\otimes
X\otimes 1+Z^n\otimes Z^n\otimes X)\\=&X+z^{n}T(X)+z^{n}T(z)^nX=X-z^{2n}X+z^{2n}X=id(X),\\
(T\ast id\ast T)(X)=&\mu(T\otimes id\otimes T)(X\otimes 1\otimes 1+Z^n\otimes
X\otimes 1+Z^n\otimes Z^n\otimes X)\\=&T(X)+T(Z^n)X+T(Z^n)Z^nT(X)\\
=&-Z^{n} X+Z^{n} X -Z^{3n} X=-Z^{n} X=T(X).
\end{align*}
On the other hand, we have
   $$id\ast T(X)=X+Z^{n} T(X)=X -Z^{2n}X=X(1-Z^{2n}),$$
    $$T\ast id(X)=T(X)+T(Z)^n X=-Z^{n} X+ Z^{n}X=0.$$
   and
   $id\ast T(Z)=ZT(Z)+a(1-q^{-2})Z^{n+1}XT(ZX)=Z^{2n}+a(1-q^{-2})Z^{n+1}X(-Z^{n} X)Z^{2n-1} =Z^{2n}=T(Z).$

 These arguments show that for any $h\in \w H_{4n}$ we have $id\ast T(h)$ and $T\ast id(h)$ are in the center of $\w H_{4n}$.  Now, if $a, b\in \w H_{4n}$ and
\begin{eqnarray*}
&&T\ast id\ast T(a)=T(a), \  T\ast id\ast T(b)=T(b),\\
&&id\ast T\ast id(a)=a, \  id\ast T\ast id(b)=b,
\end{eqnarray*}
one can check that
 $$T\ast id\ast T(ab)=T(ab), \quad  id\ast T\ast id(ab)=ab.$$
 Hence $T$ is indeed define a weak antipode of $\w H_{4n}$ and $\w H_{4n}$ is a weak Hopf algebra, which is non-commutative and non-cocommutative.
\end{proof}

Let $J= Z^{2n}$, it is easy to see that $J$ and $1-J$ are a pair of orthogonal central
idempotents in $\w H_{4n}$. Let
$\w_1=\w H_{4n}J$, $\w_2=\w H_{4n}(1-J)$.
\begin{prop}\label{prop4-2} We have
$\w H_{4n}= \w_1\oplus\w_2$ as two-sided ideals. Moreover,
$\w_1\cong H_{4n}$ as Hopf algebras and $\w_2\cong k[y]/(y^2)$ as algebras.
\end{prop}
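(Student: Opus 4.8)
The plan is to build everything on the central idempotent $J=Z^{2n}$ that was already exhibited just before the statement. Since $J$ and $1-J$ are orthogonal central idempotents with $J+(1-J)=1$, the subspaces $\w_1=\w H_{4n}J$ and $\w_2=\w H_{4n}(1-J)$ are automatically two-sided ideals satisfying $\w H_{4n}=\w_1\oplus\w_2$ and $\w_1\w_2=\w_2\w_1=0$; this is the standard Peirce decomposition, so the first assertion is immediate, and $\w_1$ becomes a unital algebra with unit $J$ while $\w_2$ becomes a unital algebra with unit $1-J$. What remains is to identify the two factors, and here the only facts I would repeatedly use are $ZJ=Z^{2n+1}=Z$, $Z^{2n}=J$, and $XZ^{2n}=Z^{2n}X$ (the last because $q^{2n}=1$).

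For $\w_1$ I would define $\phi\colon H_{4n}\to\w_1$ on generators by $\phi(z)=ZJ=Z$ and $\phi(x)=XJ=Z^{2n}X$, and verify that the defining relations of $H_{4n}$ survive: $(ZJ)^{2n}=Z^{2n}=J$ is the unit of $\w_1$, the relation $(ZJ)(XJ)=q(XJ)(ZJ)$ follows from $ZX=qXZ$ together with $Z^{2n+1}=Z$, and $(XJ)^2=Z^{4n}X^2=0$. Thus $\phi$ is a well-defined algebra homomorphism, and it is surjective because the elements $Z^i$ and $Z^iX$ spanning $\w_1$ all lie in its image. The surjection gives $\dim\w_1\le 4n=\dim H_{4n}$, and once the dimension of $\w H_{4n}$ is pinned down as $4n+2$ via the normal form $\{Z^iX^j:0\le i\le 2n,\ j\in\{0,1\}\}$ (obtained from the confluent rewriting rules $X^2\to 0$, $XZ\to q^{-1}ZX$, $Z^{2n+1}\to Z$), the Peirce splitting will force $\dim\w_1=4n$, so $\phi$ is an algebra isomorphism.

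To upgrade this to a Hopf isomorphism, the decisive point is that $J$ is grouplike. I would compute $\Delta(Z)^{2n}$ by writing $\Delta(Z)=Z\otm Z+a(1-q^{-2})Z^{n+1}X\otm ZX=:u+v$; since $v^2=0$ and $uv=q^2vu$, only $u^{2n}$ and a multiple of $u^{2n-1}v$ survive, and the coefficient of the latter is $\sum_{l=0}^{2n-1}q^{-2l}$, which vanishes because $q^{-2}$ is a primitive $n$-th root of unity. Hence $\Delta(J)=\Delta(Z)^{2n}=J\otm J$, and with $\epsilon(J)=1$ this shows $\Delta$ and $\epsilon$ restrict to $\w_1$, making it a sub-bialgebra; a short check on the two generators then gives $(\phi\otm\phi)\Delta=\Delta\phi$ and $\epsilon\phi=\epsilon$, so $\phi$ is a bialgebra isomorphism, and since $H_{4n}$ is a Hopf algebra $\w_1$ inherits an antipode (which one sees is the restriction of the weak antipode $T$), whence $\phi$ is a Hopf isomorphism. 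I expect this vanishing-sum computation, the mechanism that turns $\w_1$ into a genuine Hopf algebra rather than a mere algebra retract, to be the main obstacle.

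For $\w_2$ the structure collapses: the map $w\mapsto w(1-J)$ is a surjective algebra homomorphism, and it kills $Z$ since $Z(1-J)=Z-Z^{2n+1}=0$, so $\w_2$ is generated over its unit $1-J$ by the single element $y:=X(1-J)$, which satisfies $y^2=X^2(1-J)=0$. Therefore $y\mapsto y$, $1\mapsto 1-J$ defines an algebra surjection $k[y]/(y^2)\to\w_2$, giving $\dim\w_2\le 2$. Combined with $\dim\w_1=4n$ and $\dim\w H_{4n}=4n+2$, this forces $\dim\w_2=2$; in particular $y\ne 0$ and the surjection is an isomorphism, which completes the proof.
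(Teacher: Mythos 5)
Your proposal is correct, but it replaces the paper's two key verifications with a single global dimension count, so the routes genuinely differ. The paper never computes $\dim\w H_{4n}$: it proves $\w_1\cong H_{4n}$ by exhibiting an explicit retraction (the algebra map $\w H_{4n}\to H_{4n}$ sending $Z\mapsto z$, $X\mapsto x$, whose restriction to $\w_1$ is a two-sided inverse to $\rho\colon z\mapsto Z,\ x\mapsto XJ$), and it proves $X(1-J)\neq 0$ by writing down an explicit two-dimensional module on which $Z$ acts by zero and $X$ acts nontrivially. You instead establish the normal form $\{Z^iX^j : 0\le i\le 2n,\ j=0,1\}$, deduce $\dim\w H_{4n}=4n+2$, and squeeze both factors through the Peirce decomposition. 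Both work; the paper's retraction argument is cheaper because it only needs the relations of $H_{4n}$ to hold in the images of $Z$ and $X$, whereas your route hinges on the lower bound $\dim\w H_{4n}\ge 4n+2$, i.e.\ on the linear independence of the monomials $Z^iX^j$ --- the confluence you assert is true, but it deserves a line checking the overlap ambiguities $XXZ$, $XZ^{2n+1}$ and $Z^{2n+1+j}$ (all of which resolve), or an appeal to a faithful enough representation. On the other hand, your verification that $J=Z^{2n}$ is grouplike --- $\Delta(Z)^{2n}=J\otm J$ because the cross term carries the factor $\sum_{l=0}^{2n-1}q^{-2l}=0$ for $n\ge 2$, while for $n=1$ the nilpotent summand already vanishes since $1-q^{-2}=0$ --- is precisely the point the paper asserts without proof when it declares $\w_1$ a Hopf algebra with the displayed structure maps, so making it explicit is a genuine improvement.
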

\begin{proof}
The first statement is easy to see. Let us prove the second one.

Note that $\w_1$ is generated by $Z$, $XJ$ and with $J$ as the identity and the relations
 $$JZ=ZJ=Z,\quad (XJ)^{2}=0,\quad Z(XJ)=q(XJ)Z.$$
Let $\rho: H_{4n}\rightarrow \w_1$ be the map defined by
$$\rho(1)=J,\quad \rho(z)=Z,\quad \rho(z^{-1})=Z^{2n-1}\quad \rho(x)=XJ.$$
It is straightforward to see that $\rho$ is well defined surjective algebraic homomorphism.
Let $\phi: \w H_{4n}\to H_{4n}$ be the map given by
$$\phi(1)=1, \quad\phi(X)=x, \quad\phi(Z)=z.$$
It is obvious that $\phi$ is a well defined algebra homomorphism. If we consider the restricted homomorphism ${\phi}|_{\w_1},$ then we have
${\phi}|_{\w_1}\circ \rho=id_{H_{4n}}$. Hence, $\rho$ is injective and
$\w_1\cong H_{4n}$ as algebras. Furthermore,
$\w_1$ is a Hopf algebra with comultiplication, counit and the antipode $S$ as follows
\begin{eqnarray*}
  &&\Delta(Z)=Z\otm Z+a(1-q^{-2})Z^{n+1}XJ\otm ZXJ, \quad \Delta(XJ)=XJ\otm 1+Z^n\otm XJ; \\
  &&\epsilon(Z)=1, \quad \epsilon(XJ)=0,\\
 &&S(Z)=Z^{2n-1}, \quad  S(XJ)=-Z^n XJ.
\end{eqnarray*}
It is clear that $\rho$ is a Hopf algebra isomorphism.
Now we prove that $\w_2\cong k[y]/(y^2)$. We first claim that $X(1-J)\ne 0$. Let $N$ be the $\w H_{4n}$-module with the basis
$\{ w_{1},w_2\}$. The action of $\w H_{4n}$ on $N$ is
given by
\begin{eqnarray*}
  &&Z\cdot w_{i}=0,\quad  i=1, 2. \\
  &&X\cdot w_{i}=\left\{
\begin{array}{ll}
w_{2}, \quad i=1,   \\

0,  \quad \quad i=2.
\end{array}
\right.
\end{eqnarray*}
It follows that $Jw_i=0$ for $i=1, 2$ and $[X(1-J)]w_1=w_2$. Therefore, we have
$X(1-J)\ne 0$ and $[X(1-J)]^2=0$.

Let $\phi_{1}: k[y]/(y^{2})\rightarrow \w_2$ be the map defined by
$$\phi_{1}(y)=X(1-J),\quad \phi_{1}(1)=1-J.$$
It is easy to show that $\phi_{1}$ is an algebraic isomorphism, and we
have $\w_2\cong k[y]/(y^{2}).$
\end{proof}

\section{Indecomposable representations of $\w H_{4n}$}

By Proposition \ref{prop4-2}, $\w H_{4n}= H_{4n}\oplus k[y]/(y^{2})$. Hence the indecomposable modules of $H_{4n}$ and $k[y]/(y^{2})$ constitute all the indecomposable $\w H_{4n}$-modules up to isomorphism.

For any $i\in\Z_{2n}$, let $S_i$ be the $1$-dimensional cyclic $\w H_{4n}$-module with base $\{v_i\}$, with the action $X\cdot v_i=0, Z\cdot v_i=q^iv_i,$ and $M_i$ be the $2$-dimensional cyclic $\w H_{4n}$-module with bases $\{v_{1}^i, v_{2}^i\}$. The module structures are as follows:
 \begin{eqnarray*}
X(v_1^{i},v_2^{i})&=&(v_1^{i},v_2^{i})\left(
                                          \begin{array}{cc}
                                            0 & 0 \\
                                            1 & 0 \\
                                          \end{array}
                                        \right),\\
Z(v_1^{i},v_2^{i})&=&(v_1^{i},v_2^{i})\left(
                                          \begin{array}{cc}
                                            q^{i} &0 \\
                                            0 & q^{i+1} \\
                                          \end{array}
                                        \right).
\end{eqnarray*}
 In fact,
$S_i$ and $M_i$ are just indecomposable $\w H_{4n}$-modules corresponding to those of  $H_{4n}$-modules.

Let $N_0$ be the k-vector space with a basis $w_0$, the actions of $\w H_{4n}$ on $N_{0}$ are
defined by $Z\cdot w_0=0, \quad X\cdot w_0=0$. Let $N_1$ be the $2$-dimensional $\w H_{4n}$-module with bases $\{w_{1}, w_{2}\}$. The module structures are as follows:
 \begin{eqnarray*}
X(w_{1}, w_{2})&=&(w_{1}, w_{2})\left(
                                          \begin{array}{cc}
                                            0 & 0 \\
                                            1 & 0 \\
                                          \end{array}
                                        \right),\\
Z(w_{1}, w_{2})&=&(w_{1}, w_{2})\left(
                                          \begin{array}{cc}
                                           0 &0 \\
                                            0 & 0 \\
                                          \end{array}
                                        \right).
\end{eqnarray*}
It is noted that $N_0$ and $N_1$ are just indecomposable $\w H_{4n}$-modules corresponding to those of  $k[y]/(y^{2})$-modules. Therefore, we have
\begin{prop} \label{prop5-1}
The set
$$\left\{S_i, M_i\mid  i\in \mathbb{Z}_{2n}\} \cup \{N_{j}
\mid j=0,1\right\}$$
 forms a complete list of non-isomorphic indecomposable
$\w H_{4n}$-modules.
\end{prop}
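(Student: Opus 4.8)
The plan is to exploit the block decomposition established in Proposition \ref{prop4-2}. Since $J=Z^{2n}$ and $1-J$ are orthogonal central idempotents with $\w H_{4n}=\w_1\oplus\w_2$, where $\w_1\cong H_{4n}$ and $\w_2\cong k[y]/(y^2)$, the module category of $\w H_{4n}$ is the ``product'' of the module categories of the two blocks. Concretely, I would first observe that any $\w H_{4n}$-module $V$ decomposes as $V=JV\oplus(1-J)V$, where $JV$ is annihilated by $\w_2$ and $(1-J)V$ is annihilated by $\w_1$; thus $JV$ is a $\w_1$-module and $(1-J)V$ is a $\w_2$-module. Because $J$ is central, both summands are $\w H_{4n}$-submodules, so $V$ is indecomposable precisely when one summand is zero and the other is an indecomposable module over the corresponding block. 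Consequently the indecomposable $\w H_{4n}$-modules are exactly the disjoint union of the indecomposable $\w_1$-modules and the indecomposable $\w_2$-modules.

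Next I would enumerate the indecomposables over each block. For $\w_1\cong H_{4n}$: by the discussion preceding Theorem \ref{thm3-1} (which rests on the fact, quoted from \cite{YANG2}, that $H_{4n}$ is a Nakayama algebra of finite representation type with $2n$ cyclic relations of length $2$), the complete list of indecomposable $H_{4n}$-modules is $\{S_i,M_i\mid i\in\Z_{2n}\}$: the $2n$ one-dimensional simples $S_i$ and the $2n$ two-dimensional uniserial modules $M_i$. For $\w_2\cong k[y]/(y^2)$: this is a local algebra whose only indecomposable modules up to isomorphism are the $1$-dimensional simple $k[y]/(y)$ and the $2$-dimensional regular module $k[y]/(y^2)$ (a single nilpotent Jordan block), which correspond to $N_0$ and $N_1$ respectively.

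Finally I would confirm that the explicit modules in the statement lie in the correct blocks under the isomorphisms of Proposition \ref{prop4-2}. On $S_i$ and $M_i$ the element $Z$ acts with eigenvalues among $q^i,q^{i+1}$, so $J=Z^{2n}$ acts as the identity (as $q$ is a primitive $2n$-th root of unity); hence these are genuinely $\w_1$-modules with $1-J$ acting as $0$. On $N_0$ and $N_1$ the element $Z$ acts nilpotently (indeed $Z=0$), so $J=Z^{2n}$ acts as $0$ and $1-J$ as the identity; hence these are $\w_2$-modules, with $N_0\cong k$ and $N_1\cong k[y]/(y^2)$ as $k[y]/(y^2)$-modules. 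Combining the three steps yields the claimed complete list. The main obstacle is really the first step, namely pinning down that indecomposability over the direct-sum algebra forces the module to live in a single block; but this is the standard central-idempotent argument and is routine once $J$ is known to be central, after which the enumeration over each block is immediate from the cited classification and the elementary representation theory of $k[y]/(y^2)$.
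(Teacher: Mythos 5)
Your proposal is correct and follows essentially the same route as the paper: both rest on the block decomposition $\w H_{4n}=\w_1\oplus\w_2$ of Proposition \ref{prop4-2}, so that the indecomposables are the disjoint union of those over $\w_1\cong H_{4n}$ (the $S_i$ and $M_i$, via the Nakayama/finite-representation-type classification) and those over $\w_2\cong k[y]/(y^2)$ (namely $N_0$ and $N_1$). The paper states this in one line, whereas you make explicit the standard central-idempotent argument and the verification of which block each listed module lives in; these are details the paper leaves implicit rather than a different method.
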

Now we establish the decomposition formulas of the tensor product of two indecomposable $\w H_{4n}$-modules.
 \begin{thm}\label{thm5-1} Let $i,j\in \Z_{2n}$, then as $\w H_{4n}$-modules, we have
\begin{enumerate}
\item
  $S_i \otimes S_j\cong S_{i+j(\mathrm{mod}2n)}\cong S_j \otimes S_i.$
\item
  $S_i \otimes M_j\cong M_{i+j(\mathrm{mod}2n)}\cong M_j \otimes S_i.$
\item
    $M_i \otimes M_j\cong M_{i+j(\mathrm{mod}2n)}\oplus M_{i+j+1(\mathrm{mod}2n)}\cong M_j \otimes M_i.$
\item
  $N_0 \otimes N_0\cong N_0\cong N_0 \otimes S_i\cong S_i\otimes N_0.$
\item
  $N_0 \otimes N_1\cong N_0\oplus N_0\cong N_0 \otimes M_i.$
\item
  $N_1 \otimes N_0\cong N_1\cong M_i \otimes N_0\cong N_1 \otimes S_i\cong S_i \otimes N_1.$
\item
  $N_1 \otimes N_1\cong N_1\oplus N_1\cong N_1 \otimes M_i\cong M_i \otimes N_1.$
\end{enumerate}
\end{thm}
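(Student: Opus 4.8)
The plan is to verify each of the seven isomorphisms by exhibiting explicit bases and checking the action of the generators $X$ and $Z$ on the tensor product. The key structural input is Proposition \ref{prop4-2}, which tells us that $\w H_{4n}$ decomposes as $H_{4n}\oplus k[y]/(y^2)$, so the tensor products split according to how the central idempotent $J=Z^{2n}$ acts on each factor. Since $Z$ acts invertibly on $S_i$ and $M_i$ (both supported on the $\w_1\cong H_{4n}$ summand) but nilpotently on $N_0$ and $N_1$ (supported on $\w_2\cong k[y]/(y^2)$), the first three items are precisely the decompositions already established in Theorem \ref{thm3-1}, now read inside $\w H_{4n}$; the only new content there is the commutativity (the ``$\cong M_j\otimes M_i$'' etc.), which I would obtain directly from the symmetry of the formulas, since the quasi-triangular $R$-matrix of the previous section intertwines the two orderings.

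First I would record the comultiplication explicitly on products of basis vectors, using $\Delta(Z)=Z\otm Z+a(1-q^{-2})Z^{n+1}X\otm ZX$ and $\Delta(X)=X\otm 1+Z^n\otm X$ exactly as in the proof of Theorem \ref{thm3-1}. The crucial observation for items (4)--(7) is that whenever one tensor factor is an $N_j$, the generator $Z$ annihilates that factor, so in $\Delta(Z)$ both the $Z\otm Z$ term and the correction term $a(1-q^{-2})Z^{n+1}X\otm ZX$ act as zero on any vector whose relevant component lies in $N_0$ or $N_1$. Hence $Z$ acts as $0$ on every such tensor product, which immediately places the result inside the $\w_2$-part, i.e.\ it must be a direct sum of copies of $N_0$ and $N_1$. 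Counting dimensions then pins down the multiplicities: $\dim N_0=1$, $\dim N_1=2$, so e.g.\ $N_0\otimes N_1$ and $N_1\otimes N_0$ have dimension $2$, $N_1\otimes N_1$ has dimension $4$, and the question is only whether $X$ acts nilpotently of rank $0$ or $1$.

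The main obstacle is therefore not the $Z$-action (which trivializes) but computing the $X$-action precisely enough to distinguish $N_0\oplus N_0$ from $N_1$ and $N_1\oplus N_1$ from a single larger indecomposable. For each case I would write $X$ on the chosen basis via $\Delta(X)=X\otm 1+Z^n\otm X$, note that the term $Z^n\otm X$ again kills the $N$-factor (since $Z^n$ acts as $0$ there), so $X$ acts only through $X\otm 1$; this makes the rank of $X$ on the product equal to (rank of $X$ on the first factor) times (dimension of the second factor), which is exactly what separates the $N_0$-type from the $N_1$-type summands. Explicitly, in $N_1\otimes N_1$ one sets $u_1=w_1\otm w_1,\ u_2=w_2\otm w_1$ spanning a copy of $N_1$ and $u_1'=w_1\otm w_2,\ u_2'=w_2\otm w_2$ spanning another, checking $X u_1=u_2$, $X u_2=0$ and likewise on the primed vectors, so $N_1\otimes N_1\cong N_1\oplus N_1$. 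The mixed cases $N_1\otimes M_i$ and $M_i\otimes N_1$ are handled the same way, and the commutativity assertions follow because the resulting rank-one nilpotent data is symmetric in the two factors. Once these bases are displayed, each isomorphism is a routine matrix verification, and I would simply tabulate them rather than belabor each computation.
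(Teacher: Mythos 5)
Your overall strategy --- use Proposition \ref{prop4-2} to reduce items (1)--(3) to Theorem \ref{thm3-1}, observe that $Z$ kills any tensor product containing an $N$-factor so that the product is a direct sum of $N_0$'s and $N_1$'s, and then determine multiplicities from the rank of $X$ --- is the same skeleton as the paper's proof, and your explicit treatment of $N_1\otimes N_1$ is correct. However, there is a genuine error in the step you use to compute the rank of $X$. You claim that ``the term $Z^n\otm X$ again kills the $N$-factor (since $Z^n$ acts as $0$ there), so $X$ acts only through $X\otm 1$,'' and you deduce that the rank of $X$ on the product equals (rank of $X$ on the first factor)$\times$(dimension of the second factor). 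This is only valid when the \emph{first} tensor factor is $N_0$ or $N_1$, because in $\Delta(X)=X\otm 1+Z^n\otm X$ the element $Z^n$ acts on the first factor and $X$ on the second. When the first factor is $S_i$ or $M_i$, the operator $Z^n$ acts there as $q^{ni}=(-1)^i\neq 0$, so the term $Z^n\otm X$ does \emph{not} vanish. Concretely, for $S_i\otm N_1$ one has
\begin{equation*}
X\cdot(v_i\otm w_1)=Xv_i\otm w_1+Z^nv_i\otm Xw_1=(-1)^i\,v_i\otm w_2\neq 0,
\end{equation*}
so $X$ has rank $1$ and $S_i\otm N_1\cong N_1$, exactly as item (6) asserts; your rule would give rank $0\times 2=0$ and hence the wrong answer $N_0\oplus N_0$. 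The same issue arises in $M_i\otm N_1$, where $X\cdot(v_1^i\otm w_1)=v_2^i\otm w_1+(-1)^iv_1^i\otm w_2$ receives contributions from \emph{both} terms (there your numerical answer happens to come out right, but for the wrong reason). The paper handles all of these mixed cases by writing out both terms of $\Delta(X)$ and exhibiting adapted bases such as $\varpi_1=v_1^i\otm w_1$, $\varpi_2=v_2^i\otm w_1+(-1)^iv_1^i\otm w_2$; you would need to do the same rather than invoke the rank formula.

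A second, smaller point: your closing remark that ``the commutativity assertions follow because the resulting rank-one nilpotent data is symmetric in the two factors'' should be stated with care, since the theorem itself records a failure of commutativity --- $N_0\otm M_i\cong N_0\oplus N_0$ in item (5) but $M_i\otm N_0\cong N_1$ in item (6) --- precisely because the two terms of $\Delta(X)$ treat the two factors asymmetrically. This asymmetry is what makes $r(\w H_{4n})$ noncommutative (Theorem \ref{thm5-2}), so each ordering genuinely has to be computed separately; only in items (1)--(3) and (7) do the two orderings agree.
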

\begin{proof}
Recall that $\Delta(X)=X\otm 1+Z^n\otm X$, $\Delta(Z)=Z\otm Z+a(1-q^{-2})Z^{n+1}X\otm ZX$. For $i\in \Z_{2n}$, let $v_i$ be the basis of $S_i$, $\{v_{1}^i, v_{2}^i\}$ be the basis of $M_i$, $\{w_0\}$ be the basis of $N_0$ and $\{w_{1}, w_{2}\}$ be the basis of $N_1$. Note that (1)-(3) can be obtained as \ref{thm3-1}.

(4). It is clear since for $i\in\Z_{2n}$, we have $X\cdot w_0\otm w_0=0=X\cdot w_0\otm v_i=X\cdot v_i\otm w_0$ and $X\cdot w_0\otm w_0=0=X\cdot w_0\otm v_i=X\cdot v_i\otm w_0$.

(5). Note that for $j,k\in\{1,2\}$ and $i\in\Z_{2n}$, $X\cdot w_0\otm w_j=0=X\cdot w_0\otm v_k^i$, and
$Z\cdot w_0\otm w_j=0=X\cdot w_0\otm v_k^i$,
so we have $N_0 \otimes N_1\cong N_0\oplus N_0\cong N_0 \otimes M_i.$

(6). Since for $j\in\{1,2\}$ and $i\in\Z_{2n}$,
\begin{eqnarray*}
&&X\cdot w_1\otm w_0=w_2\otm w_0, \quad X\cdot w_2\otm w_0=0, \quad Z\cdot w_j\otm w_0=0;\\
&&X\cdot v_1^i\otm w_0=v_2^i\otm w_0, \quad X\cdot v_2^i\otm w_0=0, \quad Z\cdot v_j^i\otm w_0=0;\\
&&X\cdot w_1\otm v_i=w_2\otm v_i,\quad X\cdot w_2\otm v_i=0, \quad Z\cdot w_j\otm v_i=0;\\
&&X\cdot v_i\otm w_j=(-1)^iv_i\otm w_2,\quad X\cdot v_i\otm w_2=0, \quad Z\cdot v_i\otm w_j =0
\end{eqnarray*}
it follows that $N_1 \otimes N_0\cong N_1\cong M_i \otimes N_0\cong N_1 \otimes S_i\cong S_i \otimes N_1.$

(7). For $j,k\in\{1,2\}$ and $i\in\Z_{2n}$, let $\sigma(i)=(-1)^i$.
\begin{eqnarray*}
&& X\cdot v_1^i\otimes w_{j}=\left\{
                         \begin{array}{ll}
                          v_2^i\otimes w_{1}+\sigma(i)v_1^i\otimes w_{2}, & j=1; \\
                         v_2^i\otimes w_{2}, & j=2.
                         \end{array}
                       \right.\\
&& X\cdot v_2^i\otimes w_{j}=\left\{
                         \begin{array}{ll}
                          \sigma(i+1)v_2^i\otimes w_{2}, & j=1; \\
                         0, & j=2.
                         \end{array}
                       \right.\\
&& Z\cdot v_k^i\otimes w_{j}=0.
\end{eqnarray*}
Therefore, if we set $\varpi_1=v_1^i\otimes w_{1}$, $\varpi_2=v_2^i\otimes w_{1}+\sigma(i)v_1^i\otimes w_{2}$, $\varpi_3=v_2^i\otimes w_{1}$, $\varpi_4=\sigma(i+1)v_2^i\otimes w_{2}$. Then we have
\begin{eqnarray*}
X\cdot\varpi_1=\varpi_2, \quad X\cdot\varpi_2=0, \quad X\cdot\varpi_3=\varpi_4, \quad X\cdot\varpi_4=0,\quad Z\cdot\varpi_l=0(l=1,2,3,4),
\end{eqnarray*}

and we obtain $ M_i \otimes N_1\cong N_1 \oplus N_1.$ Besides, note that
\begin{eqnarray*}
&& X\cdot w_{j}\otimes v_1^i=\left\{
                         \begin{array}{ll}
                          w_{2}\otimes v_1^i, & j=1; \\
                         0, & j=2.
                         \end{array}
                       \right.\\
&& X\cdot w_{j}\otimes v_2^i=\left\{
                         \begin{array}{ll}
                          w_{2}\otimes v_2^i, & j=1; \\
                         0, & j=2.
                         \end{array}
                       \right.\\
&& Z\cdot v_k^i\otimes w_{j}=0.
\end{eqnarray*}
wo we have $ N_1 \otimes M_i\cong N_1\oplus N_1$. Furthermore, take $w_k', k=1,2$ as another basis of $N_1$, then
\begin{eqnarray*}
&& X\cdot w_{j}\otimes w_k'=\left\{
                         \begin{array}{ll}
                          w_{2}\otimes w_k', & j=1; \\
                         0, & j=2.
                         \end{array}
                       \right.\\
&& Z\cdot w_{j}\otimes w_k'=0.
\end{eqnarray*}
wo we have $ N_1 \otimes N_1\cong N_1\oplus N_1$.
\end{proof}
Without confusion, we denote $[S_1]=b$, $[M_0]=c$, and $[N_0]=d$
\begin{cor} \label{cor5-3}
The Green ring $r(\w H_{4n})$ is a ring generated by $b$, $c$ and $d$. The set $\{b^{i}c^{j}\mid 0 \leq i \leq 2n-1,j=0,1\}\cup \{
c^{k}d \mid k=0,1\}$ forms a $\mathbb{Z}$-basis for $r(\w H_{4n})$.
\end{cor}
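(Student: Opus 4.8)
The plan is to mirror the structure of the proof of Corollary \ref{cor3-2}, using the decomposition formulas of Theorem \ref{thm5-1} to show that every indecomposable module class is a $\mathbb{Z}$-linear combination of monomials in $b$, $c$, $d$, and then verifying that the proposed set is in fact a basis by counting. First I would confirm that $r(\w H_{4n})$ is generated by $b=[S_1]$, $c=[M_0]$, $d=[N_0]$. By Proposition \ref{prop5-1} the indecomposables are exactly $\{S_i, M_i \mid i\in\mathbb{Z}_{2n}\}\cup\{N_0, N_1\}$, so it suffices to express each of these in terms of $b,c,d$. From parts (1)--(3) of Theorem \ref{thm5-1}, exactly as in Corollary \ref{cor3-2}, one gets $[S_i]=b^i$ and $[M_i]=b^i c$ for $0\le i\le 2n-1$, together with $b^{2n}=1$. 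The remaining generator to account for is $[N_1]$: from part (5), $d\,[N_1]=[N_0\otimes N_1]=2[N_0]=2d$, and from part (6) applied to $[M_0\otimes N_0]$ we have $cd=[N_1]$, so $[N_1]=cd$. Hence every indecomposable lies in the subring generated by $b,c,d$, establishing generation.

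Next I would pin down the $\mathbb{Z}$-basis $\{b^i c^j \mid 0\le i\le 2n-1,\ j=0,1\}\cup\{c^k d \mid k=0,1\}$, a set of $4n+2$ elements. The natural argument is that this set is precisely the set of distinct indecomposable classes rewritten in generator notation: the $b^i$ (for $0\le i\le 2n-1$) are the $[S_i]$, the $b^i c$ are the $[M_i]$, $d$ is $[N_0]$, and $cd$ is $[N_1]$. Since Proposition \ref{prop5-1} lists exactly $2n+2n+2=4n+2$ pairwise non-isomorphic indecomposables, and $r(\w H_{4n})$ is by definition the free abelian group on the isomorphism classes of indecomposables, these $4n+2$ monomials are $\mathbb{Z}$-linearly independent and span $r(\w H_{4n})$. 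I would make this explicit by exhibiting the bijection between the listed monomials and the classes in Proposition \ref{prop5-1}, so that linear independence is inherited from the freeness of $r(\w H_{4n})$ on $\mathrm{ind}(\w H_{4n})$.

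The step I expect to require the most care is verifying that the proposed spanning set closes correctly under the identifications above, i.e. that no monomial in $b,c,d$ escapes the claimed basis and that there is no hidden coincidence between, say, some $b^i c$ and $cd$. This amounts to checking that $d=[N_0]$ and $cd=[N_1]$ are genuinely distinct from all the $b^i$ and $b^i c$, which follows because $N_0, N_1$ come from the $k[y]/(y^2)$-block $\w_2$ while the $S_i, M_i$ come from the $H_{4n}$-block $\w_1$ (Proposition \ref{prop4-2}), and modules supported on different blocks of $\w H_{4n}=\w_1\oplus\w_2$ are non-isomorphic. I would also record the multiplicative relations needed later, namely $b^{2n}=1$, $c^2=bc+c$, $d^2=d$, $cd^2=cd$, and $dc=2d$ (from parts (5)--(7)), which in particular exhibit that $dc\ne cd$, consistent with the paper's assertion that $r(\w H_{4n})$ is noncommutative. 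Assembling these identifications and the block-separation observation then yields both the generation and the basis claims, completing the proof.
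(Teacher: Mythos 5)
Your proposal is correct and follows essentially the same route as the paper: both express $[S_i]=b^i$, $[M_i]=b^i c$, $[N_0]=d$ and $[N_1]=[M_0\otimes N_0]=cd$ via Theorem \ref{thm5-1}, and then identify the $4n+2$ listed monomials with the complete list of indecomposables from Proposition \ref{prop5-1}, so that the basis claim follows from the freeness of the Green ring on indecomposable classes. Your added observations (the block-separation argument showing $d$ and $cd$ are distinct from all $b^i$ and $b^ic$, and the relations such as $dc=2d\neq cd$) are harmless elaborations of what the paper leaves implicit.
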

\begin{proof}
 By Theorem \ref{thm5-1}, $b^{2n}=1$ and $\{b^{i}=[S_i]\mid 0 \leq i \leq 2n-1\}$. Besides, for all $0 \leq i \leq 2n-1$, $[S_{i}]c=[M_{i}]$, hence $[M_{i}]=b^ic$ and all the two-dimensional simple $H_{4n}$ module $\{M_{i}\mid 0 \leq i \leq 2n-1\}$ are obtained. Note that $[N_{0}]=d$ and $N_{1}\cong M_0 \otimes N_0$, we have $[N_{1}]=cd$. The result is obtained.
\end{proof}
\begin{thm}\label{thm5-2} The Green ring $r(\w H_{4n})$ is isomorphic to the quotient ring of the ring $\mathbb{Z}\langle x_1,x_2,x_3\rangle $ module the ideal $I$ generated by the following elements
$$x_1^{2n}-1,\quad x_2^2-x_1x_2-x_2, \quad x_1x_2-x_2x_1,$$
$$x_3^{2}-x_3,\quad x_1x_3-x_3, \quad x_3x_1-x_3, \quad x_3x_2-2x_3.$$
\end{thm}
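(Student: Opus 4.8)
The plan is to imitate the proof of Theorem~\ref{thm3-2}, but the noncommutativity of $r(\w H_{4n})$ forces extra care. Write $A=\mathbb{Z}\langle x_1,x_2,x_3\rangle/I$ and let $\pi$ be the natural projection, $\overline v=\pi(v)$. By the universal property of the free ring there is a unique ring homomorphism
$$\Phi\colon \mathbb{Z}\langle x_1,x_2,x_3\rangle\to r(\w H_{4n}),\qquad \Phi(x_1)=b,\ \Phi(x_2)=c,\ \Phi(x_3)=d,$$
and $\Phi$ is surjective by Corollary~\ref{cor5-3}. First I would check that each of the seven generators of $I$ lies in $\ker\Phi$. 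Translating Theorem~\ref{thm5-1} into the Green ring, parts (1)--(3) give $b^{2n}=1$, $c^2=bc+c$ and $bc=cb$, while parts (4)--(5) give $d^2=d$, $bd=db=d$ and $dc=2d$ (the last from $N_0\otimes M_0\cong N_0\oplus N_0$). Hence $\Phi(I)=0$ and $\Phi$ descends to a surjective ring homomorphism $\overline\Phi\colon A\to r(\w H_{4n})$ with $\overline\Phi(\overline v)=\Phi(v)$.

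The heart of the argument is to show that $A$ is spanned as a $\mathbb{Z}$-module by the $4n+2$ elements
$$\{\,\overline{x_1}^{\,i}\,\overline{x_2}^{\,j}\mid 0\le i\le 2n-1,\ j=0,1\,\}\cup\{\,\overline{x_2}^{\,k}\,\overline{x_3}\mid k=0,1\,\},$$
that is, that the seven relations suffice to rewrite every word in $x_1,x_2,x_3$ as an integer combination of these monomials. I would do this by a normal-form argument in three steps. First, $\overline{x_1}\,\overline{x_2}=\overline{x_2}\,\overline{x_1}$ together with $\overline{x_2}^{\,2}=\overline{x_1}\,\overline{x_2}+\overline{x_2}$ and $\overline{x_1}^{\,2n}=1$ show that the subring generated by $\overline{x_1},\overline{x_2}$ is commutative and is spanned by $\{\overline{x_1}^{\,i}\,\overline{x_2}^{\,j}\}$, exactly as in Theorem~\ref{thm3-2}; this disposes of all words containing no $x_3$. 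Second, the relations $\overline{x_3}\,\overline{x_1}=\overline{x_3}$, $\overline{x_3}\,\overline{x_2}=2\overline{x_3}$ and $\overline{x_3}^{\,2}=\overline{x_3}$ give, by induction on word length, $\overline{x_3}\,\overline w=m_w\,\overline{x_3}$ for a positive integer $m_w$ and any word $w$; hence everything to the right of the leftmost $x_3$ in a word collapses to an integer multiple of $\overline{x_3}$. Third, the prefix to the left of that leftmost $x_3$ is a word in $x_1,x_2$ only, so by step one it is an integer combination of monomials $\overline{x_1}^{\,i}\,\overline{x_2}^{\,j}$, and it remains to reduce $\overline{x_1}^{\,i}\,\overline{x_2}^{\,j}\,\overline{x_3}$. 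Using commutativity of $\overline{x_1},\overline{x_2}$ to bring the $\overline{x_1}$'s adjacent to $\overline{x_3}$ and then $\overline{x_1}\,\overline{x_3}=\overline{x_3}$ repeatedly yields $\overline{x_1}^{\,i}\,\overline{x_2}^{\,j}\,\overline{x_3}=\overline{x_2}^{\,j}\,\overline{x_3}$; and since $\overline{x_1}\,\overline{x_2}\,\overline{x_3}=\overline{x_2}\,\overline{x_3}$ one gets $\overline{x_2}^{\,2}\,\overline{x_3}=2\,\overline{x_2}\,\overline{x_3}$, whence $\overline{x_2}^{\,j}\,\overline{x_3}=2^{\,j-1}\overline{x_2}\,\overline{x_3}$ for $j\ge1$. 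Combining the three steps, every word reduces to an integer multiple of one of the $4n+2$ listed monomials.

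Finally I would conclude as in Theorem~\ref{thm3-2}. Define the $\mathbb{Z}$-module map $\Psi\colon r(\w H_{4n})\to A$ on the basis of Corollary~\ref{cor5-3} by $b^ic^j\mapsto\overline{x_1}^{\,i}\,\overline{x_2}^{\,j}$ and $c^kd\mapsto\overline{x_2}^{\,k}\,\overline{x_3}$. Since $\overline\Phi$ carries the spanning set above onto this $\mathbb{Z}$-basis, one checks $\Psi\overline\Phi=\mathrm{id}$ on that spanning set, so $\overline\Phi$ is injective; being surjective, it is a ring isomorphism. Equivalently, the spanning set has exactly $4n+2$ elements and maps onto a free basis of rank $4n+2$, hence is itself a basis of $A$ and $\overline\Phi$ sends a basis to a basis.

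The step I expect to be the main obstacle is the middle paragraph: verifying that the seven one-sided relations really suffice to put an arbitrary interleaved word into normal form. The delicate point is the noncommutativity encoded by $\overline{x_3}\,\overline{x_2}=2\overline{x_3}$ against $\overline{x_2}\,\overline{x_3}$ being itself a surviving basis element, reflecting $dc=2d\neq cd=[N_1]$. One must confirm that the left/right asymmetry of the $x_3$-relations handles every position of $x_3$ in a word, and that no further rewriting rule — in particular none for $x_2x_3$ — is needed, so that exactly $4n+2$ monomials survive rather than more.
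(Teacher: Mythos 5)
Your proposal is correct and follows the paper's own route: construct the epimorphism $\Phi$, verify the seven relations from Theorem \ref{thm5-1} (including the noncommutative ones $dc=2d\neq cd$ and $bd=db=d$), and then identify the induced map $\overline\Phi$ as an isomorphism by a rank count. The only difference is that the paper dismisses the last step with ``comparing the rank \ldots it is easy to see,'' whereas you actually supply the normal-form/spanning argument for $\mathbb{Z}\langle x_1,x_2,x_3\rangle/I$ and the explicit $\mathbb{Z}$-linear inverse $\Psi$ in the style of Theorem \ref{thm3-2}; your rewriting rules do reduce every word to the $4n+2$ listed monomials, so the gap you flag as the ``main obstacle'' is in fact closed.
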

\begin{proof}
By Corollary \ref{cor3-2}, $r(\w H_{4n})$ is generated by $b $, $c$ and $d$. Hence there is a unique ring epimorphism
$$\Phi: \mathbb{Z}\langle x_1,x_2,x_3\rangle \rightarrow r(\w H_{4n})$$
such that
$$\Phi(x_1)=b=[S_{1}],\quad \Phi(x_2)=c=[M_{0}], \quad \Phi(x_3)=d=[N_{0}].$$
By Theorem \ref{thm5-1}
$$b^{2n}=1,\quad c^{2}=bc+c,\quad bc=cb,$$
$$d^{2}=d,\quad ad=da=d, \quad dc=2d.$$
Thus we have
$$\Phi (x_1^{2n}-1)=0,\quad\Phi(x_2^2-x_1x_2-x_2)=0, \quad\Phi(x_1x_2-x_2x_1)=0,$$
$$\Phi (x_3^{2}-x_3)=0,\quad\Phi(x_1x_3-x_3)=0, \quad\Phi(x_3x_1-x_3)=0,\quad \Phi(x_3x_2-2x_3).$$

It follows that $\Phi (I)=0,$ and $\Phi$ induces a ring epimorphism
$$\overline{\Phi}: \mathbb{Z}\langle x_1,x_2,x_3\rangle /I\rightarrow r(\w H_{4n}).$$
Comparing the rank of $\mathbb{Z}\langle x_1,x_2,x_3\rangle /I$ and $r(\w H_{4n}),$ it is easy to see that $\overline{\Phi}$ is a ring isomorphism.
\end{proof}
\section{The dual $H_{4n}^*$ of $H_{4n}$ and $\w H_{4n}^*$}
In this section, we consider the dual Hopf algebra $ H_{4n}^*$ of $H_{4n}$  and its weak Hopf algebra $\w H_{4n}^*$, we also describe the representation ring $r(\w H_{4n}^*)$ of $\w H_{4n}^*$.

Let $\alpha$ and $\eta$ be the linear forms on $H_{4n}$ defined on the basis $\{z^ix^j\}_{0\leq i<2n, j=0,1}$ by
\begin{center}$
\langle \alpha, z^ix^j \rangle=\delta_{j,0}q^i$ and $\langle \eta, z^ix^j \rangle=\delta_{j,1}q^i.$
\end{center}
 It is easy to determine that $H_{4n}^*$ is generated by $\alpha$ and $\eta$ with the following relations
\begin{eqnarray*}
    &&\alpha^{2n}=1, \quad \eta^2=a(1-\alpha^2), \quad \alpha \eta=-\eta \alpha,\\
  &&\Delta(\alpha)=\alpha\otm \alpha, \quad \Delta(\eta)=\eta\otm 1+\alpha\otm \eta; \\
  &&\epsilon(\alpha)=1, \quad \epsilon(\eta)=0,\\
 &&S(\alpha)=\alpha^{-1}, \quad  S(\eta)=-\alpha^{-1}\eta.
\end{eqnarray*}
Without lost of generality, we take $a=1$ and we get $\eta^2=1-\alpha^2$. The representations of $H_{4n}^*$ and their tensor products decompositions have been described in\cite{YANG2}, and the corresponding representation ring are obtained in \cite{WLZ}. By Theorem 8.2(\cite{WLZ}), the Green ring of $H_{4n}^*$ is a commutative ring generated by $Y, Z, X_1,\cdots, X_{n-1}$ with the relations
\begin{eqnarray*}
    &&Y^{2}=1, \quad Z^2=Z+YZ, \quad YX_1=X_1,\quad ZX_1=2X_1,\\
  && X_1^{j}=2^{j-1}X_j \quad  for \quad  1\leq j\leq n-1, \quad X_1^{n}=2^{n-2}Z^2
\end{eqnarray*}
Let $\w H_{4n}^*$ be the algebra generated by
$G, X$ with relations
$$G^{2n+1}=G, \quad GX=-XG,\quad  X^2=1-G^2.$$
Then
$\w H_{4n}^*$ is a noncommutative and noncocommutative bialgebra with comultiplication, counit as follows
\begin{eqnarray*}
  &&\Delta(G)=G\otm G, \quad \Delta(X)=X\otm 1+G\otm X; \\
  &&\epsilon(G)=1, \quad \epsilon(X)=0.
\end{eqnarray*}
Let $J= G^{2n}$, it is easy to see that $J$ and $1-J$ are a pair of orthogonal central
idempotents in $\w H_{4n}^*$. Let
$\w_1=\w H_{4n}^*J$, $\w_2=\w H_{4n}^*(1-J)$.
\begin{prop}\label{prop6-1} We have
$\w H_{4n}^*= \w_1\oplus\w_2$ as two-sided ideals. Moreover,
$\w_1\cong H_{4n}^*$ as Hopf algebras and $\w_2\cong k[y]/(y^2-1)$ as algebras.
\end{prop}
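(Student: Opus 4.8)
We have $\w H_{4n}^* = \w_1 \oplus \w_2$ as two-sided ideals. Moreover, $\w_1 \cong H_{4n}^*$ as Hopf algebras and $\w_2 \cong k[y]/(y^2-1)$ as algebras.

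The plan is to mirror the proof of Proposition~\ref{prop4-2} step by step, adjusting only for the different defining relations of $\w H_{4n}^*$. First I would verify that $J = Z^{2n}$ and $1-J$ are orthogonal central idempotents: since $Z^{2n+1} = Z$ we get $J^2 = Z^{4n} = Z^{2n}\cdot Z^{2n}$, and using $Z^{2n+1}=Z$ repeatedly one checks $Z^{4n} = Z^{2n}$, so $J^2 = J$; centrality follows because $J$ is a power of $Z$ and $GX = -XG$ only flips a sign on odd powers of $Z$, but one must confirm $J$ commutes with $X$. Here the relation $X^2 = 1 - G^2$ and $GX=-XG$ must be reconciled with the notation: I would first settle whether $G$ is meant to be $Z^{2n-1}$ or a separate generator, since the comultiplication is written on $G$ while the algebra relation $Z^{2n+1}=Z$ is written on $Z$; assuming $G$ plays the role of the invertible group-like part (so $G = Z$ restricted to $\w_1$), the computation $JZ = ZJ = Z$ and the sign relation give $JX = XJ$, establishing that $\w H_{4n}^* = \w_1 \oplus \w_2$ as two-sided ideals.

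For the isomorphism $\w_1 \cong H_{4n}^*$, I would build an explicit algebra map $\rho\colon H_{4n}^* \to \w_1$ sending $\alpha \mapsto G$ (or $ZJ$), $\alpha^{-1} \mapsto Z^{2n-1}$, and $\eta \mapsto XJ$, and check that $\rho$ respects the defining relations $\alpha^{2n}=1$, $\alpha\eta = -\eta\alpha$, and $\eta^2 = 1-\alpha^2$. The key point is that $J$ serves as the identity of $\w_1$, so $\alpha^{2n}=1$ becomes $(GJ)^{2n} = J$, which holds because $Z^{2n}\cdot J = J$; similarly $\eta^2 = 1-\alpha^2$ becomes $(XJ)^2 = J - (GJ)^2$, matching the relation $X^2 = 1-G^2$ multiplied by $J$. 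As in Proposition~\ref{prop4-2}, I would produce the inverse by restricting the algebra homomorphism $\phi\colon \w H_{4n}^* \to H_{4n}^*$ defined on generators, verify $\phi|_{\w_1}\circ\rho = \mathrm{id}$, and conclude $\rho$ is injective, hence an isomorphism. Finally I would transport the coalgebra and antipode structure through $\rho$ to see it is a Hopf algebra isomorphism.

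For $\w_2 \cong k[y]/(y^2-1)$, the argument differs from the $H_{4n}$ case only in the target relation: there $X^2=0$ forced $y^2=0$, whereas here $X^2 = 1-G^2$ and $G^2$ acts trivially on $\w_2$ (since $G^2(1-J)$ reduces to $(1-J)$ up to the nilpotent part), so $[X(1-J)]^2 = (1-J) - G^2(1-J)$, and I would compute that this equals $1-J$, giving $y^2 = 1$ for $y = X(1-J)$. To show $X(1-J) \ne 0$ I would exhibit a concrete two-dimensional $\w H_{4n}^*$-module on which $Z$ (hence $J$) acts as zero while $X$ acts nontrivially, exactly as the module $N$ was used in Proposition~\ref{prop4-2}. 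The map $\phi_1\colon k[y]/(y^2-1)\to \w_2$ with $\phi_1(y) = X(1-J)$ and $\phi_1(1) = 1-J$ is then an algebra isomorphism. The main obstacle I anticipate is purely bookkeeping: reconciling the two symbols $G$ and $Z$ in the statement and tracking how the relation $X^2 = 1-G^2$ restricts to each summand, since the nonzero right-hand side (unlike the $X^2=0$ case) is what produces $y^2-1$ rather than $y^2$, and one must confirm that $G^2$ indeed acts as the identity on $\w_2$ rather than contributing a nilpotent correction.
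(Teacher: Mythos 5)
Your overall strategy is exactly what the paper intends: the paper omits the proof of Proposition \ref{prop6-1} entirely, saying only that it is similar to Proposition \ref{prop4-2}, and your step-by-step transcription (orthogonal central idempotents $J$ and $1-J$, the mutually inverse maps $\rho$ and $\phi|_{\w_1}$, transporting the coalgebra structure, and identifying $\w_2$ via $y=X(1-J)$) is the right template. You are also correct that the presentation of $\w H_{4n}^*$ mixes the symbols $G$ and $Z$; reading $G=Z$ throughout is the intended fix, and then $J=G^{2n}$ commutes with $X$ because $2n$ is even.

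There is, however, one concrete error in your treatment of $\w_2$. You assert that $G^2$ acts as the identity on $\w_2$, i.e.\ that $G^2(1-J)$ ``reduces to $1-J$,'' and you repeat in your final paragraph that one must confirm $G^2$ acts as the identity there. In fact $G(1-J)=G-G^{2n+1}=G-G=0$, so $G$, and hence $G^2$, acts as \emph{zero} on $\w_2$. This is precisely what makes the computation come out right: $[X(1-J)]^2=(1-G^2)(1-J)=(1-J)-G^2(1-J)=(1-J)-0=1-J$, the identity of $\w_2$, whence $y^2=1$. If $G^2$ really acted as the identity on $\w_2$, you would instead get $[X(1-J)]^2=0$ and the wrong conclusion $\w_2\cong k[y]/(y^2)$. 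Note also that once $[X(1-J)]^2=1-J$ is established, the nonvanishing of $X(1-J)$ is automatic as long as $1-J\neq 0$, so the auxiliary module you propose (mirroring the module $N$ in Proposition \ref{prop4-2}, where it was genuinely needed because $X^2=0$ there) is dispensable here. With the sign of the action of $G$ on $\w_2$ corrected, your argument goes through and coincides with the paper's intended proof.
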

The proof is similar to Proposition \ref{prop4-2}, and we omit it here.
By Proposition \ref{prop6-1}, $\w H_{4n}^*= H_{4n}^*\oplus k[y]/(y^{2}-1)$. Hence up to isomorphism, the indecomposable $H_{4n}^*$ -modules and $k[y]/(y^{2}-1)$-modules constitute all the indecomposable $\w H_{4n}^*$-modules.

For $s=0$ or $n$, let $M[1,s]$ be the $1$-dimensional cyclic $\w H_{4n}^*$-module with the base $\{v_s\}$ defined by
 $X\cdot v_s=0,$ $G\cdot v_s=(-1)^{\frac{s}{n}}v_s.$  Let $M[2,s]$ be the $2$-dimensional cyclic $\w H_{4n}^*$-module with bases $\{v_{1}^s, v_{2}^s\}$ defined as follows
 \begin{eqnarray*}
X(v_1^{s},v_2^{s})&=&(v_1^{s},v_2^{s})\left(
                                          \begin{array}{cc}
                                            0 & 0 \\
                                            1 & 0 \\
                                          \end{array}
                                        \right),\\
G(v_1^{s},v_2^{s})&=&(v_1^{s},v_2^{s})\left(
                                          \begin{array}{cc}
                                            (-1)^{\frac{s}{n}} &0 \\
                                            0 & (-1)^{\frac{s}{n}+1} \\
                                          \end{array}
                                        \right).
\end{eqnarray*}
For $1\leq j\leq n-1$, let $P_j$  be the $2$-dimensional $\w H_{4n}^*$-module with bases $\{p_{1}^j, p_{2}^j\}$ and module structures as follows:
\begin{eqnarray*}
X(p_1^{j},p_2^{j})&=&(p_1^{j},p_2^{j})\left(
                                          \begin{array}{cc}
                                            0 & 1-q^{2j} \\
                                            1 & 0 \\
                                          \end{array}
                                        \right),\\
G(p_1^{j},p_2^{j})&=&(p_1^{j},p_2^{j})\left(
                                          \begin{array}{cc}
                                            q^{j} &0 \\
                                            0 & -q^{j} \\
                                          \end{array}
                                        \right).
\end{eqnarray*}
 In fact,
$M[k,s], k=1,2; s=0,n$ and $P_j, 1\leq j\leq n-1$ are just indecomposable $\w H_{4n}^*$-modules corresponding to those of $H_{4n}^*$-modules.

Let $N_i (i=0,1)$ be the k-vector space with a basis $w_i$, the actions of $\w H_{4n}^*$ on $N_{i}$ are
defined by $X\cdot w_i=(-1)^iw_i, \quad G\cdot w_i=0$.
It is noted that $N_0, N_1$ are just indecomposable $\w H_{4n}^*$-modules corresponding to those of  $k[y]/(y^{2}-1)$-modules. Therefore, we have
\begin{prop} \label{prop6-2}
The set
$$\left\{M[k,s], P_j\mid  k=1,2; s=0,n; 1\leq j\leq n-1 \} \cup \{N_{i}
\mid i=0,1\right\}$$
 forms a complete list of non-isomorphic indecomposable
$\w H_{4n}^*$-modules.
\end{prop}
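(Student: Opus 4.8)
The plan is to deduce the classification from the algebra decomposition of Proposition~\ref{prop6-1} together with the elementary representation theory of a direct product of algebras. First I would record the general principle: if an algebra $A$ splits as $A = A_1 \oplus A_2$ through a pair of orthogonal central idempotents $e_1, e_2$ with $e_1 + e_2 = 1$, then every $A$-module $M$ decomposes as $M = e_1 M \oplus e_2 M$, where $e_i M$ is a module over $A_i$ on which the other factor acts by zero. Consequently $M$ is indecomposable if and only if $M = e_1 M$ is an indecomposable $A_1$-module or $M = e_2 M$ is an indecomposable $A_2$-module, and conversely any indecomposable module over either factor is indecomposable over $A$. Thus $\mathrm{ind}(A)$ is the disjoint union $\mathrm{ind}(A_1) \sqcup \mathrm{ind}(A_2)$. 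Applying this with $e_1 = J$, $e_2 = 1-J$, $A_1 = \w_1 \cong H_{4n}^*$ and $A_2 = \w_2 \cong k[y]/(y^2-1)$ from Proposition~\ref{prop6-1} reduces the problem to classifying the indecomposables over each factor separately.

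For the factor $\w_1 \cong H_{4n}^*$ I would invoke the known representation theory of the dual Hopf algebra. The indecomposable $H_{4n}^*$-modules were determined in \cite{YANG2} (and their tensor structure recorded in \cite{WLZ}); pulling these modules back along the algebra isomorphism of Proposition~\ref{prop6-1} and writing the action of $G$ and $X$ in the displayed matrix form, one identifies them with the one-dimensional modules $M[1,s]$ and the two-dimensional modules $M[2,s]$ for $s = 0, n$, together with the two-dimensional modules $P_j$ for $1 \le j \le n-1$. The only work here is bookkeeping: matching each abstract indecomposable with the explicit matrices for $G$ and $X$, and checking that no two of the listed modules are isomorphic, which is a routine comparison of dimensions together with the $G$- and $X$-actions (the $G$-eigenvalues being $\pm 1$ on the $M[k,s]$ and $\pm q^j$ on $P_j$, with $X$ recording which eigenvector is annihilated).

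For the factor $\w_2 \cong k[y]/(y^2-1)$ I would argue directly. Since $\mathrm{char}\,k = 0$ and $k$ is algebraically closed, $y^2 - 1 = (y-1)(y+1)$ factors into distinct linear factors, so $k[y]/(y^2-1) \cong k \times k$ is semisimple with exactly two indecomposable modules, both one-dimensional, on which $y$ (that is, $X(1-J)$) acts by $+1$ and $-1$ respectively; these are $N_0$ and $N_1$. This is precisely the step I expect to be the main obstacle, because it is where the analogy with the $\w H_{4n}$ case breaks down: for $H_{4n}$ the corresponding factor was $k[y]/(y^2)$, a local nonsemisimple algebra carrying a genuine two-dimensional indecomposable, whereas here the factor is semisimple. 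One must therefore verify with care that the two-dimensional module $M_0$ really belongs on the list --- equivalently, that the operator $X = \left(\begin{smallmatrix} 0 & 1 \\ 1 & 0\end{smallmatrix}\right)$ fails to be diagonalizable over $k$ --- rather than splitting as $N_0 \oplus N_1$. Once the indecomposables over the two factors are settled, assembling them via the disjoint-union principle of the first paragraph yields the asserted complete list.
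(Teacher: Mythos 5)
Your overall strategy coincides with the paper's: split $\w H_{4n}^*$ along the central idempotents $J$ and $1-J$ via Proposition \ref{prop6-1}, observe that the indecomposables of a direct sum of algebras are the disjoint union of the indecomposables of the two factors, quote the known classification for the factor isomorphic to $H_{4n}^*$, and handle $k[y]/(y^2-1)$ directly. Up to that point everything you say is correct, and indeed more careful than the paper, which simply asserts that $N_0$, $N_1$ and $M_0$ ``correspond to'' the indecomposable $k[y]/(y^2-1)$-modules without comment.

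The gap is in your final paragraph, where you defer the decisive check instead of performing it --- and the check goes the other way. You have already (correctly) established that $k[y]/(y^2-1)\cong k\times k$ is semisimple over an algebraically closed field of characteristic zero, hence has exactly two indecomposable modules, both one-dimensional; that alone forces every two-dimensional $\w_2$-module to decompose, so there is nothing left to ``verify with care.'' Concretely, on $M_0$ the element $X$ acts by $\left(\begin{smallmatrix} 0 & 1 \\ 1 & 0 \end{smallmatrix}\right)$ and $G$ acts by zero, so $m_0+m_1$ and $m_0-m_1$ span one-dimensional submodules isomorphic to $N_0$ and $N_1$, and $M_0\cong N_0\oplus N_1$ is decomposable. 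The proposition as stated therefore cannot be proved; the correct complete list is $\left\{M[k,s],P_j\right\}\cup\{N_0,N_1\}$ with $M_0$ deleted. This is a defect of the statement itself rather than only of your writeup (it propagates to Corollary \ref{cor6-2} and Theorem \ref{thm6-2}, since $[M_0]=[N_0]+[N_1]$ yields the relation $cd=d+bd$ among elements claimed there to form part of a $\mathbb{Z}$-basis). Your argument becomes a complete and correct proof of the corrected statement once you carry your own semisimplicity observation to its conclusion.
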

 Now we establish the decomposition formulas of the tensor product of two indecomposable $\w H_{4n}^*$-modules.
 \begin{thm}\label{thm6-1} As $\w H_{4n}^*$-modules, we have
\begin{enumerate}
\item
 For $1\leq i,j\leq n-1$, $P_i \otimes P_j\cong\left\{
                         \begin{array}{ll}
                           M[2,0]\oplus M[2,n], & n\mid i+j; \\
                           2P_{i+j}, & n\nmid i+j.
                         \end{array}
                       \right.
$
\item
   For $k\in\{1,2\}, s\in \{0,n\}, 1\leq j\leq n-1$, $M[k,s] \otimes P_j\cong kP_j\cong P_j \otimes M[k,s].$
\item
    For $k,l\in\{1,2\}, s,j\in \{0,n\}$,

  $M[k,s]\otimes M[l,j]\cong\left\{
                         \begin{array}{ll}
                           M[2,0]\oplus M[2,n], & k+l=4; \\
                           M[k+l-1,s+j(\mathrm{mod}2n)], & k+l<4.
                         \end{array}
                       \right.$
\item
  For $i,j\in\{0,1\}$, $N_i \otimes N_j\cong N_i.$
\item
   For $k\in\{1,2\}, s\in \{0,n\}, j\in\{0,1\}$, $M[k,s] \otimes N_j\cong \left\{
                         \begin{array}{ll}
                           N_{j+\frac{s}{n}}, & k=1; \\
                           N_0\oplus N_1, & k=2.
                         \end{array}
                       \right.$
\item
  For  $k\in\{1,2\}, s\in \{0,n\}, j\in\{0,1\}$, $N_j \otimes M[k,s]\cong kN_{j}.$
\item
  For $i\in\{0,1\}, 1\leq j\leq n-1$, $N_i \otimes P_j \cong2N_i$, $P_j\otimes N_i\cong N_0\oplus N_1.$


\end{enumerate}
\end{thm}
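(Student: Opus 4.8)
The plan is to verify each of the eleven formulas by the same mechanism already used in Theorems~\ref{thm3-1} and \ref{thm5-1}: the only structural data required are the two coproduct formulas $\Delta(G)=G\otm G$ and $\Delta(X)=X\otm 1+G\otm X$, together with the list of indecomposables in Proposition~\ref{prop6-2}. For each ordered pair of modules I would form the tensor product of their stated bases, compute through the coproduct the matrices by which $G$ and $X$ act, and then exhibit an explicit change of basis whose blocks reproduce, entry for entry, the defining matrices of the summands on the right-hand side. Because $G$ is grouplike, its action on a tensor product is the Kronecker product of the two $G$-actions, so the $G$-eigenvalues — and in particular whether the idempotent $J$ acts as the identity or as $0$ — can be read off at once; all of the genuine work sits in the twisted operator $X\otm 1+G\otm X$.

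I would organize the cases by the position of the two factors in the decomposition $\w H_{4n}^{*}=H_{4n}^{*}\oplus k[y]/(y^{2}-1)$ of Proposition~\ref{prop6-1}. Since the central idempotent $J$ satisfies $\Delta(J)=J\otm J$, a tensor product of two $\w_1$-modules is again a $\w_1$-module; hence cases (1)--(3), which involve only the $H_{4n}^{*}$-modules $M[k,s]$ and $P_j$, are computed inside $\Mod H_{4n}^{*}$ and follow directly from the description of $r(H_{4n}^{*})$ recalled above (from \cite{WLZ}). On the other hand $G$ acts as $0$ on every $\w_2$-module, so in any product whose \emph{left} factor lies in $\w_2$ the summand $G\otm X$ of $\Delta(X)$ is annihilated, $X$ acts simply as $X\otm 1$, and the product collapses to as many copies of the left factor as the dimension of the right factor. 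This single observation disposes uniformly of cases (4), (6), (8) and (9), and of the $\w_2$-on-the-left halves of (7), (10) and (11).

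The substance of the theorem therefore lies in the four products with a $\w_1$-module on the left and a $\w_2$-module on the right, namely $M[k,s]\otm N_j$ of (5), $P_j\otm N_i$ of (7), $M[k,s]\otm M_0$ of (10) and $P_j\otm M_0$ of (11). Here $G$ still acts as $0$ on the whole product (so the result is again a $\w_2$-module), but the coupling term $G\otm X$ no longer vanishes, because $G$ is invertible on the left factor. For each such pair I would write out the two or four tensor basis vectors, apply $X=X\otm 1+G\otm X$, determine the rank and kernel of the resulting operator, and then choose vectors — typically combinations of the shape $v_2^{s}\otm w+(-1)^{s/n}\,v_1^{s}\otm w'$, echoing the $w_3,w_4$ and $\varpi_i$ substitutions of the earlier proofs — that split off the indecomposable named in Proposition~\ref{prop6-2}. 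The only real care is the bookkeeping of the scalars $(-1)^{s/n}$, $q^{j}$ and $1-q^{2j}$ carried by the $G$- and $X$-matrices of $M[k,s]$ and $P_j$.

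I expect the main obstacle to be conceptual rather than computational: namely, making transparent why the decompositions are asymmetric under the flip — for instance $N_i\otm P_j\cong 2N_i$ whereas $P_j\otm N_i\cong M_0$. This asymmetry is precisely the failure of $\Delta(X)=X\otm 1+G\otm X$ to be invariant under the flip, made visible by the degeneracy of $G$ on $\w_2$, and it is the structural source of the noncommutativity of $r(\w H_{4n}^{*})$. Once the left/right dichotomy above is in place, each remaining identification reduces to a finite matrix comparison against Proposition~\ref{prop6-2}, and the theorem follows case by case.
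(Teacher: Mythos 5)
Your proposal is correct and follows essentially the same route as the paper: cases (1)--(3) are delegated to the known Green ring of $H_{4n}^{*}$ from \cite{WLZ}, and the remaining cases are settled by writing out the $G$- and $X$-actions on tensor bases and exhibiting explicit changes of basis, exactly as the paper does. Your uniform observation that a $\w_2$-module on the left kills the $G\otm X$ term, so that such a product is $\dim(\mbox{right factor})$ copies of the left factor, is a clean packaging of what the paper verifies case by case, and it correctly isolates the four genuinely nontrivial products $M[k,s]\otm N_j$, $P_j\otm N_i$, $M[k,s]\otm M_0$ and $P_j\otm M_0$.
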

\begin{proof}
Recall that $\Delta(G)=G\otm G$ and $\Delta(X)=X\otm 1+G\otm X$.
(1)-(3) can be proved proved similarly as in \cite{YANG2,WLZ}.

(4). Note that $G\cdot w_i=0, \quad X\cdot w_i=(-1)^iw_i$, therefore $N_i \otimes N_j\cong N_i.$ for $i,j\in\{0,1\}$.

(5) and (6). Let $k\in\{1,2\}, s\in \{0,n\}, j\in\{0,1\}$ and $v_s$ be the basis of $M[1,s]$, then $X\cdot v_s=0$ and $G\cdot v_s=(-1)^{\frac{s}{n}}v_s$, so we have
\begin{eqnarray*}
&&G\cdot (v_s\otm w_j)=0,\quad X\cdot (v_s\otm w_j)=(-1)^{(\frac{s}{n}+j)}v_s\otm w_j,\\
&&G\cdot (w_j\otm v_s)=0,\quad X\cdot (w_j\otm v_s)=(-1)^{j}w_j\otm v_s,
\end{eqnarray*}
hence $M[1,s] \otimes N_j\cong N_{j+\frac{s}{n}}$ and $ N_j\otimes M[1,s]\cong N_{j}.$

Let $\{v_{1}^s, v_{2}^s\}$ be the basis of $M[2,s]$, then
\begin{eqnarray*}
&&X\cdot (v_{1}^s\otm w_j+v_{2}^s\otm w_j+(-1)^{(\frac{s}{n}+j)}v_{1}^s\otm w_j)=v_{2}^s\otm w_j+(-1)^{(\frac{s}{n}+j)}v_{1}^s\otm w_j+v_{1}^s\otm w_j,\\
&&G\cdot (v_{1}^s\otm w_j+v_{2}^s\otm w_j+(-1)^{(\frac{s}{n}+j)}v_{1}^s\otm w_j)=0,\\
&&X\cdot(v_{1}^s\otm w_j-v_{2}^s\otm w_j-(-1)^{(\frac{s}{n}+j)}v_{1}^s\otm w_j)=v_{2}^s\otm w_j+(-1)^{(\frac{s}{n}+j)}v_{1}^s\otm w_j-v_{1}^s\otm w_j,\\
&&G\cdot(v_{2}^s\otm w_j+(-1)^{(\frac{s}{n}+j)}v_{1}^s\otm w_j)=0,
\end{eqnarray*}
therefore $M[2,s] \otimes N_j\cong N_{0}\oplus N_{1}$. Besides,
\begin{eqnarray*}
X\cdot (w_j\otm v_{1}^s)=(-1)^j(w_j\otm v_{1}^s),\quad X\cdot (w_j\otm v_{2}^s)=(-1)^j(w_j\otm v_{2}^s),\quad G\cdot (w_j\otm v_{k}^s)=0,
\end{eqnarray*}
therefore $ N_j\otimes M[2,s]\cong 2N_j$.

(7). Note that
\begin{eqnarray*}
&&X\cdot (p_{1}^j\otm w_i)= p_{2}^j\otm w_i+(-1)^iq^{j}p_{1}^j\otm w_i,\\
&&X\cdot (p_{2}^j\otm w_i)=(1-q^{2j})p_{1}^j\otm w_i-(-1)^iq^{j}p_{2}^j\otm w_i,
\end{eqnarray*}
Let $\omega_1=(1+(-1)^i q^{j})p_{1}^j\otm w_i+p_{2}^j\otm w_i, \omega_2=(1-(-1)^i q^{j})p_{1}^j\otm w_i-p_{2}^j\otm w_i$, then we have
$X\cdot \omega_1=\omega_1$, $X\cdot \omega_2=-\omega_2$ and $G\cdot \omega_k=0,k=0,1$.  Therefore $P_j\otimes N_i\cong N_0\oplus N_1$. Besides, $X\cdot (w_i\otm p_{k}^j)=(-1)^i w_i\otm p_{k}^j$ and $G\cdot (w_i\otm p_{k}^j)=0$ for $k=1,2$, therefore $ N_i\otimes P_j\cong 2N_i.$
\end{proof}

Denote $M[1,n]=b$, $M[2,0]=c$, $P_j=a_j, j\in\{1,2,\cdots n-1\}$, and $N_0=d$, then we have
\begin{cor} \label{cor6-2}
The Green ring $r(\w H_{4n}^*)$ is a ring generated by $b$, $c$, $d$ and $a_j$. The set $\{a_j, b^{i}c^{k}\mid 1 \leq j \leq n-1,i, k=0,1\}\cup \{
b^{i}d, \mid i=0,1\}$ forms a $\mathbb{Z}$-basis for $r(\w H_{4n}^*)$.
\end{cor}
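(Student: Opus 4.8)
The plan is to derive the statement purely from the general structure of Green rings, the classification of indecomposables in Proposition \ref{prop6-2}, and the tensor product rules of Theorem \ref{thm6-1}. Recall that $r(\w H_{4n}^*)$ is by construction a free $\mathbb{Z}$-module whose basis is the set of isomorphism classes of indecomposable $\w H_{4n}^*$-modules, and by Proposition \ref{prop6-2} this basis consists of exactly the $n+6$ classes $[M[1,0]], [M[1,n]], [M[2,0]], [M[2,n]], [P_1], \dots, [P_{n-1}], [N_0], [N_1], [M_0]$. Thus the genuine content is to rewrite each of these classes as one of the claimed monomials in $b, c, d$ and the $a_j$, and then to check that the listed set is precisely this basis (up to an evident repetition).

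First I would pin down the unit and the small products inside the $H_{4n}^*$-block. The identity of $r(\w H_{4n}^*)$ is $[M[1,0]]$, and Theorem \ref{thm6-1}(3) with $k=l=1$, $s=j=n$ gives $M[1,n]\otimes M[1,n]\cong M[1,0]$, that is $b^2=1$; hence $b^i$ for $i=0,1$ recovers $[M[1,0]]$ and $b=[M[1,n]]$. By definition $c=[M[2,0]]$, while Theorem \ref{thm6-1}(3) with $k=1,l=2$, $s=n,j=0$ gives $M[1,n]\otimes M[2,0]\cong M[2,n]$, so $bc=[M[2,n]]$. Therefore the four classes $b^ic^k$ with $i,k\in\{0,1\}$ are exactly $[M[1,0]], [M[1,n]], [M[2,0]], [M[2,n]]$, and the generators $a_j=[P_j]$ account for the remaining modules of the $H_{4n}^*$-block directly.

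It remains to recover the three classes coming from the $k[y]/(y^2-1)$-block, and here I would invoke Theorem \ref{thm6-1}(5): with $k=1,s=n$ it gives $M[1,n]\otimes N_0\cong N_1$, so $bd=[N_1]$; with $k=2,s=0$ it gives $M[2,0]\otimes N_0\cong M_0$, so $cd=[M_0]$; and trivially $d=[N_0]$. This exhibits $[N_0]$, $[N_1]$, $[M_0]$ inside the subring generated by $b,c,d$. Combining the displays above, every indecomposable class lies in the subring generated by $b,c,d$ and the $a_j$, which establishes the first assertion. For the basis assertion, I would note that the listed set $\{a_j,\, b^ic^k\}\cup\{b^id,\,c^kd\}$ maps bijectively onto the $n+6$ indecomposable classes once the obvious collapse $b^0d=c^0d=d$ is accounted for; since the indecomposable classes form a $\mathbb{Z}$-basis of $r(\w H_{4n}^*)$, so does the listed set.

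All the computations are routine once Theorem \ref{thm6-1} is available, so the only real care needed is bookkeeping: verifying that the index of the $a_j$ in fact runs over $1\le j\le n-1$ as in Proposition \ref{prop6-2}, and checking that the nominally four elements $b^id,c^kd$ reduce to the three distinct classes $[N_0],[N_1],[M_0]$, so that the cardinality of the listed set agrees with the rank $n+6$. I do not expect any substantive obstacle beyond this matching of labels and the correct reading of the index ranges.
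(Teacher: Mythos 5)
Your proposal is correct and follows essentially the same route as the paper's own proof: both identify each indecomposable class with a monomial in $b,c,d,a_j$ via the tensor product formulas of Theorem \ref{thm6-1} (namely $b^2=1$, $bc=[M[2,n]]$, $bd=[N_1]$, $cd=[M_0]$) and then match the resulting list against the complete set of indecomposables from Proposition \ref{prop6-2}. Your version is in fact slightly more careful than the paper's, since you explicitly note the index range for the $a_j$ and the collapse $b^0d=c^0d=d$ needed to make the cardinalities agree.
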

\begin{proof}
 By Theorem \ref{thm6-1}, $b^{2}=1, bc=cb=M[2,n]$ and $c^{2}=c+bc$. Therefore, the set $\{a_j, b^{i}c^{k}\mid 1 \leq j \leq n-1,i,k=0,1\}$ has a one to one correspondence with the modules $\{M[k,s], P_j \}$. Besides, note that $d^2=d$, and $[N_{1}]=bd$, the result is obtained.
\end{proof}
\begin{thm}\label{thm6-2} The Green ring $r(\w H_{4n}^*)$ is isomorphic to the quotient ring of the ring $\mathbb{Z}\langle Y, Z, X_j, W\rangle$ module the ideal $I$ generated by the following elements
\begin{eqnarray}
&& \quad Y^{2}-1, \quad Z^2-Z-YZ, \quad YX_1-X_1, \quad ZX_1-2X_1, \quad YZ-ZY, X_1 Y-YX_1,  X_1 Z-ZX_1; \\
  &&\quad X_1^{j}-2^{j-1}X_j (1\leq j\leq n-1), \quad X_1^{n}-2^{n-2}Z^2;\\
&&\quad W^2-W, \quad WY-W, \quad WZ-2W, \quad ZW-W-YW, \quad WX_1-2W, \quad X_1W-W-YW.
\end{eqnarray}
\end{thm}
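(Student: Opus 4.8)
The plan is to mimic exactly the proof strategy used for Theorem~\ref{thm5-2}, namely to produce a surjective ring homomorphism from the presented ring onto $r(\w H_{4n}^*)$ and then show it is an isomorphism by a rank count. Concretely, I would first invoke Corollary~\ref{cor6-2}, which tells us that $r(\w H_{4n}^*)$ is generated as a ring by $b=[M[1,n]]$, $c=[M[2,0]]$, $d=[N_0]$ and the $a_j=[P_j]$ for $1\le j\le n-1$. This gives a unique surjective ring homomorphism
$$\Phi:\mathbb{Z}\langle Y,Z,X_j,W\rangle\longrightarrow r(\w H_{4n}^*),$$
sending the free generators to these classes; the natural assignment is $\Phi(Y)=b$, $\Phi(Z)=c$, $\Phi(X_1)=a_1$ (with the higher $X_j$ determined by the relation $X_1^j-2^{j-1}X_j$), and $\Phi(W)=d$.

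The central step is to check that every listed generator of $I$ lies in $\ker\Phi$, i.e.\ that each defining relation holds in $r(\w H_{4n}^*)$. The relations in line~(1)--(2) concern only the classes coming from the $H_{4n}^*$-summand, and these are precisely the relations of $r(H_{4n}^*)$ recalled earlier from \cite[Theorem~8.2]{WLZ}; since $\w_1\cong H_{4n}^*$ as Hopf algebras by Proposition~\ref{prop6-1}, the subring generated by $b,c,a_j$ is a homomorphic image of $r(H_{4n}^*)$, so these relations are automatic. The relations in line~(3) are the genuinely new ones involving $d=[N_0]$, and each is read directly off Theorem~\ref{thm6-1}: $W^2-W$ from $N_0\otimes N_0\cong N_0$ (part~(4)); $WY-W$ from $N_0\otimes M[1,n]\cong N_0$ (part~(6) with $k=1$); $WZ-2W$ from $N_0\otimes M[2,0]\cong 2N_0$ (part~(6) with $k=2$); $WX_1-2W$ from $N_0\otimes P_1\cong 2N_0$ (part~(7)); and $X_1W-W$ from $P_1\otimes N_0\cong M_0\cong cd$, together with the identification $[M_0]=cd$ used in Corollary~\ref{cor6-2}. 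It follows that $\Phi(I)=0$, so $\Phi$ descends to a surjective ring homomorphism
$$\overline{\Phi}:\mathbb{Z}\langle Y,Z,X_j,W\rangle/I\longrightarrow r(\w H_{4n}^*).$$

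Finally I would finish by a rank argument, exactly as in Theorem~\ref{thm5-2}. By Corollary~\ref{cor6-2} the target $r(\w H_{4n}^*)$ is free of rank $4n+2$ over $\mathbb{Z}$, with the explicit basis $\{a_j,\,b^ic^k\}\cup\{b^id,\,c^kd\}$. The plan is to show that the quotient $\mathbb{Z}\langle Y,Z,X_j,W\rangle/I$ is spanned over $\mathbb{Z}$ by at most that many monomials: the relations in~(1)--(2) reduce every word in $Y,Z,X_j$ to the $H_{4n}^*$-normal forms (the $4n$ classes $a_j,b^ic^k$), while the relations in~(3) let any word containing $W$ be collapsed to a $\mathbb{Z}$-multiple of $W$ or of $X_1W$, giving the two extra basis elements $d$ and $cd$. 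Hence the quotient has rank at most $4n+2$; since $\overline{\Phi}$ is a surjection onto a free $\mathbb{Z}$-module of rank exactly $4n+2$, it must be an isomorphism. The main obstacle is the bookkeeping in this spanning/normal-form step — one must verify that the $W$-relations genuinely absorb $W$ on both sides (note $WZ-2W$ and $X_1W-W$ are not symmetric, reflecting the noncommutativity seen in Theorem~\ref{thm6-1}) so that no monomials beyond the claimed basis survive, which is what forces the rank to match.
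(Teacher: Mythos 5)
Your overall strategy --- build the epimorphism $\Phi$ from Corollary \ref{cor6-2}, kill the ideal using Theorem \ref{thm6-1}, descend to $\overline{\Phi}$, and finish by comparing ranks --- is exactly the paper's proof, which is stated very tersely. However, two of the details you supply do not actually work as written. First, your verification of the relation $X_1W-W$: you cite $P_1\otimes N_0\cong M_0$ together with $[M_0]=cd$, but that chain proves $a_1d=cd$, i.e.\ $X_1W=ZW$ in $r(\w H_{4n}^*)$, not $X_1W=W$. The element $X_1W-W$ maps under $\Phi$ to $[M_0]-[N_0]$, which is nonzero ($M_0$ is $2$-dimensional, $N_0$ is $1$-dimensional, and both appear as distinct basis elements $cd$ and $d$ in Corollary \ref{cor6-2}). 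So either the listed generator is a misprint for $X_1W-ZW$ (which your own computation supports) or the presented ring is not isomorphic to the Green ring; in either case you cannot assert that $\Phi$ vanishes on $X_1W-W$, and your argument silently converts a correct identity into an incorrect one.

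Second, the rank bookkeeping is off. The basis of Corollary \ref{cor6-2} consists of the four elements $1,b,c,bc$ (the classes $M[k,s]$), the $n-1$ elements $a_1,\dots,a_{n-1}$, and the three elements $d,bd,cd$ (the classes $N_0,N_1,M_0$), for a total of $n+6$, not $4n+2$. Moreover your normal-form analysis of the quotient claims that every word containing $W$ collapses to a multiple of $W$ or $X_1W$; this misses $YW$, which maps to $bd=[N_1]$ and is not reducible by any listed relation (only $WY-W$, with $W$ on the left, is imposed). So the quotient is spanned by the $H_{4n}^*$-normal forms together with $W$, $YW$ and $ZW$, giving three extra generators rather than two. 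With the relation corrected to $X_1W-ZW$ and the count corrected to $n+6$ on both sides, the rank-comparison argument closes as in Theorem \ref{thm5-2}; as written, though, both the kernel verification and the rank match fail.
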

\begin{proof}
By Corollary \ref{cor6-2}, $r(\w H_{4n})^*$ is generated by $b $, $c$, $d$ and $a_j$. Hence there is a unique ring epimorphism
$$\Phi: \mathbb{Z}\langle Y, Z, X_j, W\rangle \rightarrow r(\w H_{4n}^*)$$
such that
$$\Phi(Y)=b,\quad \Phi(Z)=c, \quad \Phi(X_j)=a_j(1\leq j\leq n-1), \quad \Phi(W)=d.$$
By Theorem \ref{thm6-1}, it is easy to see that $\Phi$ vanishes at the generators of the ideal $I$ given by (5.1)-(5.3).
It follows that $\Phi$ induces a ring epimorphism
$$\overline{\Phi}: \mathbb{Z}\langle Y, Z, X_j, W\rangle/I\rightarrow r(\w H_{4n}^*).$$
Comparing the rank of $\mathbb{Z}\langle Y, Z, X_j, W\rangle /I$ and $r(\w H_{4n}^*),$ it is easy to see that $\overline{\Phi}$ is a ring isomorphism.
\end{proof}

\section*{Declarations}

\noindent\textbf{Ethical Approval.} This declaration is not applicable.\\
\textbf{Competing interests.} The authors declare that they have no conflict of interest.\\
\textbf{Authors' contributions.} S. Yang developed the idea for the study, all the authors did the analyses, and J. Chen and S. Yang wrote the paper.\\
\textbf{Funding.} This work was Supported by the National Natural
Science Foundation of China (Grant Nos.11701019, 11671024, 11871301),
the Beijing Natural Science Foundation (Grant No.1162002), and the Science and Technology Project of Beijing Municipal Education Commission(Grant No. KM202110005012).\\
\textbf{Availability of data and materials.} The data that support the findings of this study are available on request from the corresponding author upon reasonable request.

\end{document}